\numberwithin{equation}{section}
\theoremstyle{plain}	     
\newtheorem{thm}{Theorem}[section] 
\newtheorem{cor}[thm]{Corollary}
\newtheorem{lem}[thm]{Lemma}
\newtheorem{prop}[thm]{Proposition}
\theoremstyle{definition} 
\newtheorem{defn}[thm]{Definition}
\newtheorem{exmp}[thm]{Example}
\theoremstyle{remark} 
\newtheorem{rem}[thm]{Remark}
\newcommand{\disp}{\displaystyle}
\newcommand{\sn}{\operatorname{sn}}
\newcommand{\cn}{\operatorname{cn}}
\newcommand{\dn}{\operatorname{dn}}
\begin{document}
\title{The basis property of generalized Jacobian elliptic functions
\footnote{This work was supported by MEXT/JSPS KAKENHI Grant (No. 24540218).}}
\author{Shingo Takeuchi \\
Department of Mathematical Sciences\\
Shibaura Institute of Technology
\thanks{307 Fukasaku, Minuma-ku,
Saitama-shi, Saitama 337-8570, Japan. \endgraf
{\it E-mail address\/}: shingo@shibaura-it.ac.jp \endgraf
{\it 2010 Mathematics Subject Classification.} 
Primary 34L30, 33E05; Secondary 34L10, 42A65, 41A30}}
\date{}

\maketitle

\begin{abstract}
The Jacobian elliptic functions are generalized
to functions including the generalized trigonometric functions.
The paper deals with the basis property of the sequence of 
generalized Jacobian elliptic functions in any Lebesgue space. 
In particular, it is shown that the sequence of the classical 
Jacobian elliptic functions is a basis in any Lebesgue space
if the modulus $k$ satisfies $0 \le k \le 0.99$. 
\end{abstract}


\section{Introduction}

The Jacobian elliptic function $\sn(x,k)$ and 
the complete elliptic integral of the first kind $K(k)$
play important roles 
in expressing exact solutions of, 
for example, the pendulum equation $u''+\lambda \sin{u}=0$,
a typical bistable equation $u''+\lambda u(1-u^2)=0$, and so on. 

Now we will propose new generalization of $\sn(x,k)$ and $K(k)$.
For constants $p,\ q \in (1,\infty)$ and $k \in [0,1)$, 
we define a \textit{generalized Jacobian 
elliptic function} $\sn_{pq}(x,k):[0,K_{pq}(k)] \to [0,1]$ with 
a modulus $k$ as
$$x=\int_0^{\sn_{pq}(x,k)}
=\frac{dt}{(1-t^q)^{\frac1p}(1-k^qt^q)^{\frac1{p'}}},$$
where $p'=p/(p-1)$ and  
$$K_{pq}(k)=\int_0^1 \frac{dt}{(1-t^q)^{\frac1p}(1-k^qt^q)^{\frac1{p'}}}.$$
We extend the domain of $\sn_{pq}(x,k)$ 
to $\mathbb{R}$ so that we obtain a $4K_{pq}(k)$-period function
like the sine function,
and call the extended function $\sn_{pq}(x,k)$ again.
Then, $\sn_{22}(x,k)=\sn(x,k)$ and $K_{22}(k)=K(k)$ when $p=q=2$;
and $\sn_{pq}(x,0)=\sin_{pq}{x}$ and $K_{pq}(0)=\pi_{pq}/2$ when $k=0$,
where $\sin_{pq}{x}$ is the generalized trigonometric function 
and $\pi_{pq}$ is the half period of $\sin_{pq}{x}$,
which will be introduced in Section \ref{sec:GJEF} below.
Therefore, $\sn_{pq}(x,k)$ is also generalization of 
both $\sn(x,k)$ and $\sin_{pq}{x}$.

In the previous paper \cite{T}, 
the author proposed another generalization of $\sn(x,k)$ and $K(k)$,
and applied them to bifurcation problems for $p$-Laplacian.
As we will see in Section \ref{sec:GJEF}, 
$\sn_{pq}(x,k)$ and $K_{pq}(k)$ above are defined in a slightly 
different way from those in \cite{T}, but $\sn_{pq}(x,k)$ 
also satisfies the following equation involving $p$-Laplacian nevertheless.
$$(|u'|^{p-2}u')'+\frac{(p-1)q}{p}|u|^{q-2}u
(1+(p-1)k^q-pk^q|u|^q)(1-k^q|u|^q)^{p-2}=0.$$
While the generalization of $K(k)$ of \cite{T} 
converges to a finite value as $k \to 1$ when $p>2$, 
the $K_{pq}(k)$ diverges to $\infty$ as $k \to 1$ for any $p>1$. In this sense, 
$\sn_{pq}(x,k)$ has closer properties to $\sn(x,k)$ than the function 
defined in \cite{T}. 

In the present paper, we will show the basis property of functions
\begin{equation}
\label{eq:functions}
f_n(x,k)=\sn_{pq}(2nK_{pq}(k)x,k),\quad n=1,2,\ldots,
\end{equation}
which means that 
the family of these functions is a basis in Banach spaces.
Here, a sequence $\{\varphi_n\}$ in a Banach space $X$ is 
called a basis for $X$
if for every $u \in X$ there exists a unique sequence of scalars 
$\{\alpha_n\}$ such 
that $u=\sum_{n=1}^{\infty}\alpha_n \varphi_n$ in the strong sense. 
In general, when we try to find an approximation of a given function
by a family of functions $\{\varphi_n\}$, it is desirable that
$\{\varphi_n\}$ is a basis which approximates to the function 
with convergence of higher order as possible. 
Concerning this, for example, we have known an interesting study
\cite{BL} of Boulton and Lord. They study 
the best index $q$ for which $\{\sin_{q}{(n\pi_{q}x)}\}$ approximates
well to the solution of $p$-Poisson problem, where $\sin_q{x}=\sin_{qq}{x}$
and $\pi_{q}=\pi_{qq}$. 
The basis property is quite fundamental
to such a stimulating problem.
     
When $p=q=2$, the sequence \eqref{eq:functions} is the family 
of Jacobian elliptic functions $\{\sn(2nK(k)x,k)\}$. 
In this case, Craven \cite{C} proves that if the modulus $k$ 
satisfies $0 \leq k \leq 0.99$,
then the sequence is complete in $L^2(0,1)$. Since the sequence
is not orthogonal, we have no guarantee of its basis property.

On the other hand, when $k=0$, the sequence \eqref{eq:functions}
is the family of generalized trigonometric functions 
$\{\sin_{pq}(n\pi_{pq}x)\}$. On the sequence for $p=q$, 
Binding et al.\,\cite{BBCDG} first studied the basis property.
Recently, Edmunds et al.\,\cite{EGL}
show that if, for example, $p'/q<4/(\pi^2-8)$, then the sequence is 
a basis in $L^{\alpha}(0,1)$ for any $\alpha \in (1,\infty)$.

This paper deals with the basis property of \eqref{eq:functions}
for general $p,\ q \in (1,\infty)$ and $k \in [0,1)$. Our results involve results 
of \cite{C} when $p=q=2$ and \cite{EGL} when $k=0$.

\begin{thm}
\label{thm:Main}
Let $p,\ q \in (1,\infty)$ and $r=\max\{p',q\}$. 
If 
\begin{equation}
\label{eq:Alpha}
\frac{1}{q}B\left(\frac{1}{r},\frac{1}{r}\right) <\frac{8}{\pi^2-8},
\end{equation}
then $\{f_n(x,k)\}$ forms a Riesz basis of $L^2(0,1)$ and 
a Schauder basis of $L^\alpha(0,1)$ for any 
$\alpha \in (1,\infty)$ 
when $k=0$ or
\begin{equation}
\label{eq:K}
\frac{\tanh_r^{-1}{k^{\frac{q}{r}}}}{k^{\frac{q}{r}}} \leq 
\frac{8q}{\pi^2-8}
B\left(\frac{1}{r},\frac{1}{r}
\right)^{-1},
\end{equation}
where $B(x,y)=\int_0^1 t^{x-1}(1-t)^{y-1}\,dt$ is the Beta function
and $\tanh_r^{-1}{x}$ is a \textit{generalized inverse hyperbolic function}
$$\tanh_r^{-1}{x}=\int_0^x \frac{dt}{1-t^r}.$$
\end{thm}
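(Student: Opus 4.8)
The plan is to compare $\{f_n\}$ with the ordinary sine system and reduce the whole statement to a single numerical inequality on the Fourier coefficients of one profile. First I would record the self-similar structure $f_n(x)=\sn_{pq}(2K_{pq}(k)\,nx,k)=f_1(nx)$, so that all the $f_n$ are dilates of $f_1$. Since $\sn_{pq}(\cdot,k)$ is odd and symmetric about $y=K_{pq}(k)$, the odd $2$-periodic extension of $f_1$ expands in odd harmonics only,
$$f_1(x)=\sum_{m=1}^\infty\beta_m\sin((2m-1)\pi x),\qquad \beta_m=2\int_0^1 f_1(x)\sin((2m-1)\pi x)\,dx,$$
whence $f_n(x)=\sum_{m\ge1}\beta_m\sin((2m-1)n\pi x)$ with the same $\beta_m$ for every $n$. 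Thus each $f_n$ is $\beta_1\sin(n\pi x)$ plus higher harmonics, and the theorem should follow once I show $\lambda:=\beta_1^{-1}\sum_{m\ge2}|\beta_m|<1$.

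This reduction I would carry out for both conclusions at once. For $L^2$, take the orthonormal basis $e_n=\sqrt2\sin(n\pi x)$ and the normalized vectors $\varphi_n=(\sqrt2/\beta_1)f_n$, so that $\varphi_n-e_n=\beta_1^{-1}\sum_{m\ge2}\beta_m e_{(2m-1)n}$; as $n\mapsto(2m-1)n$ is injective for each fixed $m$, the family $\{e_{(2m-1)n}\}_n$ is orthonormal and the triangle inequality gives $\|\sum_n c_n(\varphi_n-e_n)\|_2\le\lambda(\sum_n|c_n|^2)^{1/2}$. The Paley--Wiener/Bari stability theorem then makes $\{\varphi_n\}$, hence $\{f_n\}$, a Riesz basis of $L^2(0,1)$. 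For $L^\alpha$ I would use that the sine system is a Schauder basis of $L^\alpha(0,1)$ for $1<\alpha<\infty$ (M.\ Riesz), and that each dilation $D_j\colon g(x)\mapsto g(jx)$ on the odd $2$-periodic extension is an isometry of $L^\alpha(0,1)$, because $\int_\ell^{\ell+1}|g|^\alpha$ is independent of $\ell$. Writing the transition operator $T\colon\sin(n\pi x)\mapsto f_n$ as $T=\beta_1 I+R$ with $R=\sum_{m\ge2}\beta_m D_{2m-1}$, one gets $\|R\|_{L^\alpha\to L^\alpha}\le\sum_{m\ge2}|\beta_m|=\lambda\beta_1<\beta_1$, so $T$ is invertible by a Neumann series and carries the Schauder basis of sines onto $\{f_n\}$.

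It remains to estimate the coefficients, which is where the hypotheses enter. Integrating by parts twice and using $\sn_{pq}(0,k)=0$, $\sn_{pq}'(K_{pq}(k),k)=0$ and $\cos((2m-1)\pi/2)=0$, the boundary terms vanish and
$$\beta_m=-\frac{8K_{pq}(k)}{(2m-1)^2\pi^2}\int_0^{K_{pq}(k)}\sn_{pq}''(y,k)\sin\!\left(\frac{(2m-1)\pi y}{2K_{pq}(k)}\right)dy.$$
As $\sn_{pq}(\cdot,k)$ is increasing and concave on $[0,K_{pq}(k)]$, one has $\sn_{pq}''\le0$ there and $\int_0^{K_{pq}(k)}(-\sn_{pq}'')\,dy=\sn_{pq}'(0,k)=1$; bounding $|\sin|\le1$ yields the decay $|\beta_m|\le 8K_{pq}(k)/((2m-1)^2\pi^2)$, so that, with $\sum_{m\ge2}(2m-1)^{-2}=(\pi^2-8)/8$,
$$\sum_{m\ge2}|\beta_m|\le\frac{(\pi^2-8)K_{pq}(k)}{\pi^2}.$$
For $\beta_1$ I would instead keep $\sin\ge0$ to see $\beta_1>0$ and to get a lower bound: after the substitution $s=\sn_{pq}(y,k)$ and the elementary inequality $\cos(\tfrac\pi2 v)\ge1-v$ on $[0,1]$, $\beta_1$ is bounded below by a weighted integral of $(1-s^q)^{-1/p}(1-k^qs^q)^{-1/p'}$. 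The crucial reformulation is that the left side of \eqref{eq:K} equals $\int_0^1 du/(1-k^qu^r)$, and that the choice $r=\max\{p',q\}$ dominates both integrand exponents and collapses the $(p,q)$-dependence into $B(1/r,1/r)$.

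The aim is then to combine these estimates into
$$\lambda\le\frac{\pi^2-8}{8}\cdot\frac1q\,B\!\left(\tfrac1r,\tfrac1r\right)\int_0^1\frac{du}{1-k^qu^r},$$
so that \eqref{eq:K} (respectively \eqref{eq:Alpha} when $k=0$, where the last integral is $1$) is exactly the assertion $\lambda<1$. I expect the main obstacle to be the sharp lower bound for $\beta_1$ together with the bookkeeping that fits the full $(p,q,k)$-dependence into $B(1/r,1/r)$ and $\tanh_r^{-1}$: the two-sided control of $\beta_1$ and $K_{pq}(k)$ must be uniform enough that the worst case $r=\max\{p',q\}$ governs the constant, and the divergence $K_{pq}(k)\to\infty$ as $k\to1$ is precisely what forces the restriction on $k$ recorded in \eqref{eq:K}.
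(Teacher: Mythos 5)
Your operator-theoretic frame is exactly the paper's: the dilations $D_{2m-1}$ are the isometries $M_m$ of Section \ref{sec:TOT}, the decomposition $T=\beta_1I+R$ with the Neumann-series criterion $\sum_{m\ge2}|\beta_m|<\beta_1$ is \eqref{eq:(14)}, the tail bound $|\beta_m|\le 8K_{pq}(k)/((2m-1)^2\pi^2)$ via two integrations by parts and $\int|f_1''|\,dx$ is \eqref{eq:(12)}, and your observation that the left side of \eqref{eq:K} equals $\int_0^1 du/(1-k^qu^r)$ is correct. The problem is the lower bound for $\beta_1$, which you yourself flag as the main obstacle and which your sketch does not deliver. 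Integrating by parts once and replacing $\cos(\tfrac{\pi}{2}v)$ by $1-v$ gives $\beta_1\ge\frac{8}{\pi}\int_0^{1/2}f_1(x)\,dx=\frac{4}{\pi K_{pq}(k)}\int_0^1 s\,w(s)\,ds$ with $w(s)=(1-s^q)^{-1/p}(1-k^qs^q)^{-1/p'}$, and the resulting sufficient condition is $\int_0^1 s\,w(s)\,ds>\frac{\pi^2-8}{4\pi}\bigl(\int_0^1 w(s)\,ds\bigr)^2$ --- a genuinely different, two-sided inequality that you never connect to \eqref{eq:Alpha}--\eqref{eq:K}. The linearization of the cosine discards exactly the concentration of the weight near $x=1/2$ that produces the constant $8/\pi^2$: even in the model case $f_1(x)=\sin(\pi x)$ your chain, if one further estimates $\int_0^{1/2}f_1$ by any pointwise linear minorant, yields only $\beta_1\ge 2/\pi<8/\pi^2$. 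The paper avoids all of this with Lemma \ref{lem:convexity}: the scaling substitution $t=ys$ in \eqref{eq:maru2} gives the pointwise bound $\sn_{pq}(2K_{pq}(k)x,k)\ge 2x$, whence directly $|\tau_1|\ge 8\int_0^{1/2}x\sin(\pi x)\,dx=8/\pi^2$, and everything collapses to the single condition $K_{pq}(k)<8/(\pi^2-8)$ (Theorem \ref{thm:general}).

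The second gap is that the inequality $K_{pq}(k)\le\frac1qB\bigl(\frac1r,\frac1r\bigr)\frac{\tanh_r^{-1}k^{q/r}}{k^{q/r}}$ is asserted ("the choice $r=\max\{p',q\}$ dominates both integrand exponents") but not proved, and it is not a one-line domination of integrands: the paper needs the monotonicity of $K_{pq}$ in $p$ and $q$ (Lemma \ref{lem:kpq}), the symmetry $K_{pq}(k)=\frac{p'}{q}K_{q'p'}(k^{q/p'})$ (Lemma \ref{lem:symmetry}) to handle the case $p'>q$, the identity $\pi_{r'r}=\frac2rB(\frac1r,\frac1r)$, and a Tchebycheff-integral-inequality argument (Lemma \ref{lem:estimatek}) to bound $K_{r'r}(\kappa)$ by $\frac{\pi_{r'r}}{2}\tanh_r^{-1}(\kappa)/\kappa$. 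Without these two quantitative inputs the reduction you set up, correct as it is, does not reach the stated hypotheses \eqref{eq:Alpha} and \eqref{eq:K}.
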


\begin{rem}
The function $\tanh_r^{-1}{x}/x$ is a monotone increasing function
from $(0,1)$ onto $(1,\infty)$.
\end{rem}

From Theorem \ref{thm:Main}, 
we obtain an improvement of Craven's result stated in Remark
of \cite[Theorem 2]{C}.

\begin{cor}
\label{cor:main}
If $1<p' \leq q<\infty$, then $\{f_n(x,k)\}$
forms a Riesz basis of $L^2(0,1)$ and 
a Schauder basis of $L^\alpha(0,1)$ for any 
$\alpha \in (1,\infty)$
when $k=0$ or
\begin{equation*}
\frac{\tanh_q^{-1}{k}}{k} \leq 
\frac{8q}{\pi^2-8}
B\left(\frac{1}{q},\frac{1}{q}
\right)^{-1}.
\end{equation*}
In particular, 
the sequence of Jacobian elliptic functions $\{\sn(2nK(k)x,k)\}$ 
does so when $0 \leq k \leq 0.99$.
\end{cor}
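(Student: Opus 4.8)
The plan is to derive Corollary \ref{cor:main} directly from Theorem \ref{thm:Main} by specializing the parameters. Under the hypothesis $1<p'\le q<\infty$ we have $r=\max\{p',q\}=q$, so the Beta-function quantity in \eqref{eq:Alpha} becomes $\tfrac1q B\!\left(\tfrac1q,\tfrac1q\right)$ and condition \eqref{eq:K} reads exactly $\tanh_q^{-1}{k}/k \le \tfrac{8q}{\pi^2-8}B\!\left(\tfrac1q,\tfrac1q\right)^{-1}$, which is the inequality in the corollary. So the only substantive thing to check is that \eqref{eq:Alpha} itself holds automatically, i.e.\ that
\[
\frac1q B\!\left(\frac1q,\frac1q\right) < \frac{8}{\pi^2-8}
\]
for all $q\in(1,\infty)$; once that is in hand, Theorem \ref{thm:Main} applies verbatim and gives the basis conclusions. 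The second sentence of the corollary, specializing to $p=q=2$, is then just a numerical instance.

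First I would verify the uniform bound on $g(q):=\tfrac1q B\!\left(\tfrac1q,\tfrac1q\right)$. Using $B\!\left(\tfrac1q,\tfrac1q\right)=\Gamma(1/q)^2/\Gamma(2/q)$, I expect $g$ to be monotone in $q$, so the supremum over $q\in(1,\infty)$ is attained in the limit $q\to\infty$. As $q\to\infty$ one has $1/q\to0$ and $\Gamma(1/q)\sim q$, $\Gamma(2/q)\sim q/2$, giving $B\!\left(\tfrac1q,\tfrac1q\right)\sim 2q$ and hence $g(q)\to 2$. Since $8/(\pi^2-8)\approx 4.28>2$, the strict inequality \eqref{eq:Alpha} holds across the whole range. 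I would either prove monotonicity of $g$ (e.g.\ via log-convexity properties of $\Gamma$, or by differentiating and using the digamma function) or, more economically, bound $g(q)\le 2$ directly; the latter is cleaner and suffices.

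Next I would establish the concrete claim for Jacobian elliptic functions. Setting $p=q=2$ gives $r=2$, so $\sn_{22}=\sn$ and $f_n(x,k)=\sn(2nK(k)x,k)$ as noted in the introduction, and $\tanh_2^{-1}{k}=\int_0^k dt/(1-t^2)=\operatorname{artanh}k$ is the ordinary inverse hyperbolic tangent. With $B\!\left(\tfrac12,\tfrac12\right)=\pi$, condition \eqref{eq:K} becomes
\[
\frac{\operatorname{artanh}k}{k}\le \frac{16}{\pi(\pi^2-8)}.
\]
I would then check that this is satisfied at $k=0.99$; since $\operatorname{artanh}k/k$ is increasing in $k$ (by the Remark following Theorem \ref{thm:Main}, applied with $r=2$), it holds for all $k\in[0,0.99]$, and the $k=0$ case is covered separately by the theorem.

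The main obstacle is the numerical verification in the last step: one must confirm that $\operatorname{artanh}(0.99)/0.99 \le 16/\big(\pi(\pi^2-8)\big)$. The right-hand side is approximately $2.72$, while $\operatorname{artanh}(0.99)=\tfrac12\log(199)\approx 2.649$, so $\operatorname{artanh}(0.99)/0.99\approx 2.676$, which is indeed below the threshold but only by a slim margin. Because the inequality is tight, I would carry the estimates to enough decimal places to make the comparison rigorous rather than merely plausible; this is the delicate point, whereas the reduction from Theorem \ref{thm:Main} and the uniform bound $g(q)<8/(\pi^2-8)$ are routine.
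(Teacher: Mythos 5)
Your proposal is correct and follows essentially the same route as the paper: reduce to Theorem \ref{thm:Main} with $r=q$, verify \eqref{eq:Alpha} via the uniform bound $tB(t,t)\le 2$ (which the paper extracts from the proof of Proposition \ref{lem:pipp}) together with $2<8/(\pi^2-8)$, and then specialize to $p=q=2$ where $B\left(\tfrac12,\tfrac12\right)=\pi$ turns \eqref{eq:K} into $\tanh^{-1}k/k\le 16/\big(\pi(\pi^2-8)\big)$, checked numerically for $k\le 0.99$. The paper records that the threshold is in fact $k\le 0.9909\cdots$, consistent with your margin estimate.
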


We will give another corollary of Theorem \ref{thm:Main},
whose conditions are verified easier than 
\eqref{eq:Alpha} and \eqref{eq:K}. 

\begin{cor}
\label{thm:main}
Let $p,\ q \in (1,\infty)$ and $r=\max\{p',q\}$. 
If 
\begin{equation}
\label{eq:alpha}
\frac{r}{q} <\frac{4}{\pi^2-8},
\end{equation}
then $\{f_n(x,k)\}$  forms a Riesz basis of $L^2(0,1)$ and 
a Schauder basis of $L^\alpha(0,1)$ for any 
$\alpha \in (1,\infty)$ when 
\begin{equation}
\label{eq:k}
0 \leq k< \left[1-\left\{\frac{(\pi^2-8)r}{4q}\right\}^{r}\right]^{\frac1q}.
\end{equation}
\end{cor}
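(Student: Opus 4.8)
The plan is to derive Corollary~\ref{thm:main} directly from Theorem~\ref{thm:Main}, by showing that the transparent hypotheses \eqref{eq:alpha} and \eqref{eq:k} imply the sharp but implicit hypotheses \eqref{eq:Alpha} and \eqref{eq:K}. Everything rests on the single elementary estimate
\begin{equation*}
B\left(\frac1r,\frac1r\right)\le 2r\qquad(r>1),
\end{equation*}
which I would prove first. Granting it, the implication \eqref{eq:alpha}$\Rightarrow$\eqref{eq:Alpha} is immediate: dividing \eqref{eq:alpha} by $r$ gives $1/q<4/((\pi^2-8)r)$, whence $\frac1q B(\frac1r,\frac1r)<\frac{4}{(\pi^2-8)r}B(\frac1r,\frac1r)\le\frac{4}{(\pi^2-8)r}\cdot 2r=\frac{8}{\pi^2-8}$.

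To prove the Beta estimate I would argue pointwise on the integrand. Writing $B(\frac1r,\frac1r)=\int_0^1\bigl(t(1-t)\bigr)^{\frac1r-1}\,dt$, the claim is that
\begin{equation*}
\bigl(t(1-t)\bigr)^{\frac1r-1}\le t^{\frac1r-1}+(1-t)^{\frac1r-1}
\end{equation*}
for all $t\in(0,1)$. Multiplying through by $\bigl(t(1-t)\bigr)^{1-\frac1r}>0$ reduces this to the subadditivity-type inequality $1\le t^{1-\frac1r}+(1-t)^{1-\frac1r}$, which holds because $1-\frac1r\in(0,1)$ forces $t^{1-\frac1r}\ge t$ and $(1-t)^{1-\frac1r}\ge 1-t$. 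Integrating the pointwise bound and using $\int_0^1 t^{\frac1r-1}\,dt=r$ then yields $B(\frac1r,\frac1r)\le 2r$.

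For the modulus condition I would first record the estimate
\begin{equation*}
\tanh_r^{-1}{x}\le\frac{x}{(1-x^r)^{1/r}},\qquad x\in[0,1),
\end{equation*}
which follows since $F(x)=x(1-x^r)^{-1/r}-\tanh_r^{-1}{x}$ vanishes at $x=0$ and has $F'(x)=(1-x^r)^{-1}\bigl[(1-x^r)^{-1/r}-1\bigr]>0$ on $(0,1)$. Taking $x=k^{q/r}$, so that $x^r=k^q$, gives $\tanh_r^{-1}{(k^{q/r})}/k^{q/r}\le(1-k^q)^{-1/r}$. On the other hand \eqref{eq:alpha} makes the right side of \eqref{eq:k} a genuine positive number, and \eqref{eq:k} itself is equivalent to $(1-k^q)^{1/r}>\frac{(\pi^2-8)r}{4q}$; taking reciprocals and invoking $B(\frac1r,\frac1r)\le 2r$ once more gives $(1-k^q)^{-1/r}<\frac{4q}{(\pi^2-8)r}\le\frac{8q}{\pi^2-8}B(\frac1r,\frac1r)^{-1}$. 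Chaining the two displays establishes \eqref{eq:K}, so Theorem~\ref{thm:Main} applies; the case $k=0$ is covered automatically, since then \eqref{eq:k} holds trivially.

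I expect the only genuine obstacle to be the Beta estimate $B(\frac1r,\frac1r)\le 2r$. The naive approach of bounding $(1-t)^{\frac1r-1}$ by its value at $t=\frac12$ is too lossy for large $r$, so the real content is finding the correct pointwise comparison; the reduction to $t^{1-\frac1r}+(1-t)^{1-\frac1r}\ge1$ is what makes the bound simultaneously elementary and sharp enough, being asymptotically tight as $r\to\infty$. Once this estimate and the monotonicity bound for $\tanh_r^{-1}$ are in hand, the rest is routine algebraic rearrangement.
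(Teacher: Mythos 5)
Your proof is correct, and its overall reduction is exactly the paper's: verify \eqref{eq:Alpha} from \eqref{eq:alpha} via $B(\frac1r,\frac1r)\le 2r$, and verify \eqref{eq:K} from \eqref{eq:k} via the chain $\tanh_r^{-1}(k^{q/r})/k^{q/r}\le (1-k^q)^{-1/r}<\frac{4q}{(\pi^2-8)r}\le\frac{8q}{\pi^2-8}B(\frac1r,\frac1r)^{-1}$, then invoke Theorem~\ref{thm:Main}. Where you differ is in how the two ingredient estimates are obtained. The paper simply cites them: the bound $tB(t,t)\le 2$ comes from the proof of Proposition~\ref{lem:pipp}, which rests on Euler's integral formula for $\log B(s,t)$ (quoted from Whittaker--Watson) to show $tB(t,t)$ decreases from $2$ to $1$ on $(0,1)$; and the bound $\tanh_r^{-1}k/k\le(1-k^r)^{-1/r}$ is the third inequality of Lemma~\ref{lem:estimatek}, proved there from the integral representation $\tanh_r^{-1}k/k=\int_0^1(1-k^r+k^rt^r)^{-1/r}\,dt$. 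Your replacements are more elementary and self-contained: the pointwise comparison $(t(1-t))^{\frac1r-1}\le t^{\frac1r-1}+(1-t)^{\frac1r-1}$, reduced to $t^{1-\frac1r}+(1-t)^{1-\frac1r}\ge 1$, gives $B(\frac1r,\frac1r)\le 2r$ with no classical machinery, and your monotonicity argument for $F(x)=x(1-x^r)^{-1/r}-\tanh_r^{-1}x$ is a clean substitute for the paper's substitution-based proof. What you lose is only economy (the paper gets both facts for free from lemmas it needs anyway) and the slightly sharper information that $tB(t,t)$ is actually monotone; what you gain is a proof of the corollary that does not lean on the Euler formula. Your handling of $k=0$ via the separate clause of Theorem~\ref{thm:Main} is also the right reading.
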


\begin{rem}
(i) If $p' \leq q$, i.e., $r=q$, then \eqref{eq:Alpha} and \eqref{eq:alpha}
hold.
(ii) Case $k=0$ in \eqref{eq:k} corresponds to the result of 
\cite[Theorem 4.4]{EGL}.
(iii) When $r=q=2$, the value of the right-hand side of \eqref{eq:k} 
is about $0.88$, which is not so satisfactory as
$0.99$ of Corollary \ref{cor:main}. However, we can check 
\eqref{eq:k} much easier than \eqref{eq:K}.
\end{rem}

The paper is organized as follows. In Section \ref{sec:PB} we give a 
summary of general properties of bases in Banach spaces.
In Section \ref{sec:GJEF} we recall the generalized 
trigonometric functions and introduce new generalization of 
Jacobian elliptic functions. 
In Section \ref{sec:PsK} we observe properties of 
the generalized Jacobian elliptic function $\sn_{pq}(x,k)$
and its quarter period $K_{pq}(k)$.
To show that the sequence 
\eqref{eq:functions} is a basis in $L^\alpha(0,1)$ for any $\alpha \in (0,1)$,
we depend on the strategy of Binding et al. \cite{BBCDG} and 
Edmunds et al. \cite{EGL}.
Our main device is a linear mapping $T$ of $L^\alpha(0,1)$,
satisfying $Te_n=f_n$, where $e_n=\sin{(n\pi x)}$, and decomposing into
a linear combination of certain isometries. In Section \ref{sec:TOT}
we show that $T$ is a bounded operator for $p \in (1,\infty)$. 
Section \ref{sec:BIT} is devoted to the proof of 
boundedness of the inverse for the
ranges \eqref{eq:Alpha} and \eqref{eq:K}.


\section{Properties of Bases}
\label{sec:PB}

In this section we will give a summary of properties of
bases in Banach spaces.
For details, we can refer to Gohberg and Kre\u{\i}n \cite{GK},
Higgins \cite{H}, and Singer \cite{S}.

A sequence $\{x_n\}$ in an infinite dimensional Banach space $X$
is called a \textit{basis of $X$} 
if for every $x \in X$ there exists
a unique sequence of scalars $\{\alpha_n\}$ such that 
$x=\sum_{i=1}^{\infty} \alpha_i x_i$ (i.e., such that $\lim_{n \to \infty}
\|x-\sum_{i=1}^n \alpha_i x_i \|=0$).
A basis $\{x_n\}$ of a topological linear space $U$ is said to be 
a \textit{Schauder basis} of $U$, if all coefficient functionals $f_n,\
n=1,2,\ldots$, are continuous on $U$.

\begin{prop}
\label{prop:schauder}
Every basis of a Banach space is a Schauder basis of this space.
\end{prop}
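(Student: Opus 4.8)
The plan is to promote the abstract basis to a Schauder basis by invoking the three classical pillars of Banach-space theory: the completeness of $X$, the open mapping theorem, and the uniform boundedness they produce. Write each $x \in X$ as $x = \sum_{i=1}^{\infty} \alpha_i(x)\, x_i$ with the (a priori merely algebraically defined) coefficient functionals $\alpha_i$, and introduce the associated partial-sum operators $S_n x = \sum_{i=1}^n \alpha_i(x)\, x_i$. The goal reduces to showing that each $\alpha_n$ is continuous; since $\alpha_n(x)\, x_n = S_n x - S_{n-1} x$ and no basis vector $x_n$ can vanish (uniqueness of the expansion would fail otherwise, so $\|x_n\| > 0$), it suffices to prove that the $S_n$ are uniformly bounded, i.e. $\sup_n \|S_n\| < \infty$.

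To extract that uniform bound I would equip $X$ with the auxiliary norm
$$|||x||| = \sup_{n} \left\| \sum_{i=1}^n \alpha_i(x)\, x_i \right\|.$$
It is immediate that $|||\cdot|||$ is finite (convergent sequences are bounded), that it dominates the original norm, $\|x\| \leq |||x|||$, and that it satisfies the norm axioms, the only nontrivial point being that $|||x||| = 0$ forces every partial sum, and hence $x$, to vanish.

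The crux — and the step I expect to be the main obstacle — is proving that $(X, |||\cdot|||)$ is again complete. Given a $|||\cdot|||$-Cauchy sequence $\{y^{(m)}\}$, the inequality $|\alpha_n(y^{(m)}) - \alpha_n(y^{(l)})|\,\|x_n\| \leq 2\,|||y^{(m)} - y^{(l)}|||$ shows that each coordinate $\alpha_n(y^{(m)})$ is scalar-Cauchy and converges to some $\alpha_n$. One then fixes $\ep$, chooses $N$ with $\|\sum_{i=1}^n (\alpha_i(y^{(m)}) - \alpha_i(y^{(l)}))\, x_i\| < \ep$ for all $n$ and all $m, l \geq N$, and lets $l \to \infty$ to control the tails $\|\sum_{i=n+1}^{n'} (\alpha_i(y^{(m)}) - \alpha_i)\, x_i\|$ uniformly. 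Combining this with the convergence of the partial sums of the fixed element $y^{(m)}$ shows that $\sum_i \alpha_i x_i$ converges to some $y \in X$ and that $y^{(m)} \to y$ in $|||\cdot|||$; this is where the bookkeeping is delicate and must be carried out carefully.

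Once completeness is in hand, the formal identity map $\iota \colon (X, |||\cdot|||) \to (X, \|\cdot\|)$ is a continuous bijection between two Banach spaces, so by the open mapping theorem its inverse is bounded and the two norms are equivalent: $|||x||| \leq C\|x\|$ for some constant $C$. Then $\|S_n x\| \leq |||x||| \leq C\|x\|$ gives the sought uniform bound $\|S_n\| \leq C$, whence $|\alpha_n(x)| \leq 2C\|x\|/\|x_n\|$ and every coefficient functional is continuous. This is precisely the statement that the basis is a Schauder basis.
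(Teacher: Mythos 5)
Your argument is correct and complete in outline: it is the classical Banach--Singer proof (renorm by $|||x|||=\sup_n\|S_nx\|$, verify completeness of the new norm, apply the open mapping theorem to get $\sup_n\|S_n\|<\infty$, and deduce continuity of the coefficient functionals from $\alpha_n(x)x_n=S_nx-S_{n-1}x$ with $x_n\neq 0$). The paper gives no proof of its own but simply cites Singer \cite[Theorem 3.1, p.20]{S}, and that is precisely the argument you have reconstructed, so there is nothing to add.
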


\begin{proof}
See \cite[Theorem 3.1, p.20]{S}. 
\end{proof}

\begin{defn}
A basis $\{x_n\}$ of a Banach space $X$ is said to be

(a) a \textit{Bessel basis}, if 
$$\sum_{i=1}^{\infty}\alpha_i x_i\ \mbox{is convergent} \Rightarrow
 \sum_{i=1}^{\infty}|\alpha_i|^2<\infty,$$
i.e., there exists a constant $c>0$ such that we have
$$c \sqrt{\sum_{i=1}^n|\alpha_i|^2}
\le \left\|\sum_{i=1}^n \alpha_i x_i\right\|$$
for all finite sequences of scalars $\alpha_1,\ldots,\alpha_n$.

(b) a \textit{Hilbert basis}, if 
$$\sum_{i=1}^{\infty}|\alpha_i|^2<\infty\ \Rightarrow
\sum_{i=1}^{\infty}\alpha_i x_i\ \mbox{is convergent},$$
i.e., there exists a constant $C>0$ such that we have
$$\left\|\sum_{i=1}^n \alpha_i x_i\right\|
\le C \sqrt{\sum_{i=1}^n|\alpha_i|^2}$$
for all finite sequences of scalars $\alpha_1,\ldots,\alpha_n$.

(c) a \textit{Riesz basis}, if it is both a Bessel basis and a Hilbert 
basis,
i.e., there exist two constants $c>0$ and $C>0$ such that we have
$$c \sqrt{\sum_{i=1}^n|\alpha_i|^2}
\le \left\|\sum_{i=1}^n \alpha_i x_i\right\|
\le C \sqrt{\sum_{i=1}^n|\alpha_i|^2}$$
for all finite sequences of scalars $\alpha_1,\ldots,\alpha_n$.
\end{defn}

\begin{exmp}
\label{exmp:singer343}
In the space $X=L^p(-\pi,\pi),\ p \in (1,\infty)$, 
the sequence $\{x_n\}$, where
$$x_0(t)=\dfrac12, \quad x_{2n-1}(t)=\sin{nt} ,\quad
x_{2n}(t)=\cos{nt} \quad (t \in [-\pi,\pi],\ n=1,2,\ldots)$$
is a bounded Bessel basis if $p \ge 2$ and a bounded
Hilbert basis if $1<p \le 2$. In particular, 
it is a Riesz basis if $p=2$. 
\end{exmp}

\begin{proof}
See \cite[Example 11.1, pp.342--345]{S}.
\end{proof}

We call two sequences $\{\phi_n\}$ and 
$\{\psi_n\}$ in Banach space $X$ \textit{equivalent}
if there exists a linear homeomorphism
(i.e., bounded, linear and invertible operator) $T$
on $X$ such that $\psi_n=T(\phi_n)$ for every $n$.
Note that by `invertible' we mean that $T^{-1}$ exists and 
is bounded on all of $X$. 

\begin{prop}
\label{prop:higgins75}
If a sequence $\{\psi_n\}$
in a Banach space is equivalent to a basis
$\{\phi_n\}$, it too is a basis.
\end{prop}

\begin{proof}
See \cite[Lemma, p.\,75]{H}.
\end{proof}

Furthermore, we have

\begin{prop}[Bari]
\label{thm:bari311}
Let $H$ be a Hilbert space. The following assertions are equivalent.

$(\rm{a})$ The sequence $\{\phi_n\}$ forms a basis of 
$H$, equivalent to an orthonormal basis.

$(\rm{b})$ The sequence $\{\phi_n\}$ is complete and 
a Riesz basis in $H$.
\end{prop}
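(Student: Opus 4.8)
The plan is to prove the two implications separately, the unifying observation being that for any orthonormal basis $\{e_n\}$ of $H$ the Parseval identity $\|\sum_i \alpha_i e_i\|^2=\sum_i|\alpha_i|^2$ holds, so that the Riesz inequalities of Definition (c) say precisely that a linear map carrying $\{e_n\}$ onto $\{\phi_n\}$ is bounded above and bounded below. Thus everything reduces to turning such estimates into statements about a linear homeomorphism $T$.

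First I would treat $(\mathrm{a})\Rightarrow(\mathrm{b})$. Let $\{e_n\}$ be the orthonormal basis to which $\{\phi_n\}$ is equivalent and let $T$ be the linear homeomorphism with $\phi_n=Te_n$. Completeness is immediate: since $T$ is a continuous bijection of $H$, it carries dense sets to dense sets, so $\overline{\operatorname{span}}\{\phi_n\}=T\bigl(\overline{\operatorname{span}}\{e_n\}\bigr)=T(H)=H$. For the Riesz property, for any finite scalar sequence one has $\|\sum_i\alpha_i\phi_i\|=\|T\sum_i\alpha_i e_i\|$, and combining boundedness of $T$ and of $T^{-1}$ with Parseval yields $\|T^{-1}\|^{-1}\sqrt{\sum_i|\alpha_i|^2}\le\|\sum_i\alpha_i\phi_i\|\le\|T\|\sqrt{\sum_i|\alpha_i|^2}$; these are exactly the two-sided estimates with $c=\|T^{-1}\|^{-1}$ and $C=\|T\|$, so $\{\phi_n\}$ is a complete Riesz basis.

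For the converse $(\mathrm{b})\Rightarrow(\mathrm{a})$ I would fix an arbitrary orthonormal basis $\{e_n\}$ of $H$ and \emph{define} $T$ on the dense subspace $\operatorname{span}\{e_n\}$ by $T\bigl(\sum_i\alpha_i e_i\bigr)=\sum_i\alpha_i\phi_i$; this is well defined because coefficients relative to an orthonormal basis are unique. The Riesz inequalities together with Parseval give $c\|\sum_i\alpha_i e_i\|\le\|T\sum_i\alpha_i e_i\|\le C\|\sum_i\alpha_i e_i\|$, so $T$ is bounded on a dense subspace and extends uniquely to a bounded operator on $H$ with $c\|x\|\le\|Tx\|\le C\|x\|$ for all $x$. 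The lower bound makes $T$ injective with closed range, and the range contains every $\phi_n$; hence by completeness $\operatorname{Range}(T)\supseteq\overline{\operatorname{span}}\{\phi_n\}=H$, so $T$ is a bijection whose inverse satisfies $\|T^{-1}y\|\le c^{-1}\|y\|$. Thus $T$ is a linear homeomorphism with $\phi_n=Te_n$, and Proposition \ref{prop:higgins75} then shows $\{\phi_n\}$ is a basis equivalent to the orthonormal basis $\{e_n\}$, which is assertion (a).

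The hard part is the surjectivity of $T$ in the second implication. By themselves the two-sided Riesz inequalities only exhibit $\{\phi_n\}$ as a Riesz basis of the closed linear span of its own elements; it is exactly the completeness hypothesis that upgrades ``closed range containing all $\phi_n$'' to ``range equal to $H$,'' and hence a Riesz system into an honest basis of the whole space. Everything else is a routine transfer of estimates through $T$ and an appeal to the bounded-below property for invertibility.
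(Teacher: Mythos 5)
Your proof is correct. The paper itself offers no argument for this proposition --- it simply cites the first and third assertions of Theorem 2.1 of Gohberg and Kre\u{\i}n --- so there is nothing to compare line by line; what you have written is precisely the standard argument that the citation points to: transfer the two-sided Riesz estimates through the map $e_n \mapsto \phi_n$ via Parseval's identity in one direction, and in the other direction build $T$ on the dense span, extend by boundedness, and use completeness of $\{\phi_n\}$ to upgrade ``injective with closed range containing all $\phi_n$'' to surjectivity. One small remark: under the paper's own Definition a Riesz basis is by fiat already a basis of $H$, hence automatically complete, so in the direction $(\mathrm{b})\Rightarrow(\mathrm{a})$ the completeness hypothesis only does genuine work if, as in Gohberg--Kre\u{\i}n, the Bessel/Hilbert inequalities are taken as defining properties of an arbitrary sequence rather than of a basis; your proof is correctly pitched at that more general level, and your closing observation that completeness is exactly what forces surjectivity of $T$ identifies the one step that is not a routine transfer of estimates.
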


\begin{proof}
See the first and third assertions of \cite[Theorem 2.1, pp.310--311]{GK}.
\end{proof}


\section{Generalized Functions}
\label{sec:GJEF}

This section is devoted to the definitions of
 two kinds of generalized functions.

\subsection{Generalized Trigonometric Functions}
\label{ssec:GTF}

\textit{Generalized Trigonometric functions} were introduced in 
1879 by E.\,Lundberg (see 
Lindqvist and Peetre \cite[pp.113-141]{LP2}).
After that, these functions have been developed mainly by 
Elbert \cite{E}, Lindqvist 
\cite{Li}, Dr\'{a}bek and Man\'{a}sevich \cite{DM}, 
and Lang and Edmunds \cite{LE}.

For any constants $p,\ q \in (1,\infty)$, we define $\pi_{pq}$ by 
$$\pi_{pq}=2\int_0^1 \frac{dt}{(1-t^q)^{\frac1p}}
=\frac{2}{q}B\left(1-\frac{1}{p},\frac{1}{q}\right)
=\frac{2\Gamma(1-\frac1p)\Gamma(\frac1q)}{q\Gamma(1-\frac1p+\frac1q)},$$
where $B$ and $\Gamma$ are the Beta and Gamma functions, respectively. 
Then, for any $x \in [0,\pi_{pq}/2]$ 
we define $\sin_{pq}{x}$ by
$$x=\int_0^{\sin_{pq}{x}} \frac{dt}{(1-t^q)^{\frac1p}}.$$
Clearly, $\sin_{pq}{x}$ is an increasing function in $x$ 
from $[0,\pi_{pq}/2]$ onto $[0,1]$. 
We extend the domain of $\sin_{pq}{x}$ to 
$[0,\pi_{pq}]$ by $\sin_{pq}{x}=\sin_{pq}(\pi_{pq}-x)$,
and furthermore, to the whole of $\mathbb{R}$ by 
$\sin_{pq}{(x+\pi_{pq})}=-\sin_{pq}{x}$, so that $\sin_{pq}{x}$ has
$2\pi_{pq}$-periodicity. 
We can see that $\pi_{22}=\pi$ and $\sin_{22}{x}=\sin{x}$. Moreover, 
the function $y=\sin_{pq}{x}$
satisfies that $y,\ |y'|^{p-2}y' \in C^1(\mathbb{R})$, 
and $y \in C^2(\mathbb{R})$ if $1<p \leq 2$.

We agree that $\pi_p$ and $\sin_p{x}$ denote $\pi_{pp}$ and $\sin_{pp}{x}$
when $p=q$, respectively. In that case, we can also refer to 
\cite{DEM,Do,DoR,DKT,E,Li}.

Using $\sin_{pq}{x}$, for $x \in [0,\pi_{pq}/2]$ we also define 
\begin{equation}
\label{eq:cos}
\cos_{pq}{x}=(1-\sin_{pq}^q{x})^{\frac1q}.
\end{equation}
Clearly, $\cos_{pq}{x}$ is a decreasing function in $x$ 
from $[0,\pi_{pq}/2]$ onto $[0,1]$.
We extend the domain of $\cos_{pq}{x}$ 
to $[-\pi_{pq}/2,\pi_{pq}/2]$ by $\cos_{pq}{x}=\cos_{pq}{(-x)}$,
and furthermore, 
to the whole of $\mathbb{R}$ in the same way as $\sin_{pq}{x}$.
Then, $\cos_{pq}{x}$ has $2\pi_{pq}$-periodicity. 
We can see that $\cos_{22}{x}=\cos{x}$.
An analogue of $\tan{x}$ is obtained by defining
$$\tan_{pq}{x}=\frac{\sin_{pq}{x}}{\cos_{pq}{x}}$$
for those values of $x$ at which $\cos_{pq}{x}\neq 0$.
This means that $\tan_{pq}{x}$ is defined for all $x \in \mathbb{R}$ except
for the points $(k+1/2)\pi_{pq}\ (k \in \mathbb{Z})$.
We denote by $\cos_{p}{x}$ and $\tan_p{x}$ as for the case $\sin_p{x}$.
The functions $\sin_p{x}$ and $\cos_p{x}$ are useful for Pr\"{u}fer
transformation of half-linear differential equations.
For this, see \cite{Do,DoR,E,N}.

These functions satisfy, for $x \in (0,\pi_{pq}/2)$,
\begin{gather}
\cos_{pq}^q{x}+\sin_{pq}^q{x}=1,\label{eq:cs} \\
(\sin_{pq}{x})'=\cos_{pq}^{\frac{q}{p}}{x},\notag \\
(\cos_{pq}{x})'=-\sin_{pq}^{q-1}{x}\cos_{pq}^{1-\frac{q}{p'}}{x}, \notag \\
(\cos_{pq}^{\frac{q}{p'}}{x})'=-\frac{q}{p'}\sin_{pq}^{q-1}{x}, \notag \\
(\tan_{pq}{x})'=\cos_{pq}^{-1-\frac{q}{p'}}{x}. \notag
\end{gather}
The case $p=q=r$ for some $r \in (1,\infty)$ is as follows.
\begin{gather*}
\cos_{r}^r{x}+\sin_{r}^r{x}=1,\label{eq:csr} \\
(\sin_{r}{x})'=\cos_{r}{x},\notag \\
(\cos_{r}{x})'=-\sin_{r}^{r-1}{x}\cos_{r}^{2-r}{x}, \notag \\
(\cos_{r}^{r-1}{x})'=-(r-1)\sin_{r}^{r-1}{x}, \notag \\
(\tan_{r}{x})'=\cos_{r}^{-r}{x}. \notag
\end{gather*}
It is useful to collect formulae for case $p=r'$ and $q=r$ for some $r \in (1,\infty)$.
\begin{gather}
\cos_{r'r}^r{x}+\sin_{r'r}^r{x}=1, \notag \\
(\sin_{r'r}{x})'=\cos_{r'r}^{r-1}{x}, \label{eq:sr'r} \\
(\cos_{r'r}{x})'=-\sin_{r'r}^{r-1}{x}, \label{eq:cr'r} \\
(\tan_{r'r}{x})'=\cos_{r'r}^{-2}{x}. \notag
\end{gather}
 
In particular, it is important that for any $p,\ q \in (1,\infty)$  
\begin{equation}
\label{eq:reduction}
((\sin_{pq}{x})')^p+\sin_{pq}^q{x}=1.
\end{equation}
We can find many other properties of these functions in 
\cite{EGL,LE}. 

\begin{rem}
There are some different definitions of $\cos_{pq}{x}$ from 
\eqref{eq:cos}. For example,
Dr\'{a}bek and Man\'{a}sevich \cite{DM}
define $\cos_{pq}{x}$ by 
$$\cos_{pq}{x}=(\sin_{pq}{x})',$$
and so \eqref{eq:reduction} gives
$$\cos_{pq}^p{x}+\sin_{pq}^q{x}=1,$$
which is slightly different from \eqref{eq:cs}.
The fact that $\sin_{pq}{x}$ satisfies \eqref{eq:reduction}
is essential, independently of the definition of $\cos_{pq}{x}$.
\end{rem}

\begin{prop}
\label{lem:pipp}
The number $\pi_{r'r}$ is an increasing function in $r \in (1,\infty)$
and $\disp \lim_{r \to 1+0}\pi_{r'r}=2$ and 
$\disp \lim_{r \to +\infty}\pi_{r'r}=4$.
\end{prop}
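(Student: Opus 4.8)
The plan is to work directly with the integral representation
$$\pi_{r'r}=2\int_0^1 \frac{dt}{(1-t^r)^{\frac{1}{r'}}},$$
which follows from the definition of $\pi_{pq}$ upon setting $p=r'$, $q=r$ and noting $1-1/p=1-1/r'=1/r$. The cleanest route is to make the substitution $s=t^r$, giving $t=s^{1/r}$, $dt=\tfrac{1}{r}s^{\frac{1}{r}-1}\,ds$, so that
$$\pi_{r'r}=\frac{2}{r}\int_0^1 s^{\frac{1}{r}-1}(1-s)^{-\frac{1}{r'}}\,ds
=\frac{2}{r}B\!\left(\frac{1}{r},\frac{1}{r}\right),$$
using $1-1/r'=1/r$ again in the second exponent. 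This matches the Beta-function identity already recorded in the paper, and reduces everything to analyzing the single-variable function
$$\Phi(r)=\frac{2}{r}B\!\left(\frac{1}{r},\frac{1}{r}\right)
=\frac{2}{r}\,\frac{\Gamma(1/r)^2}{\Gamma(2/r)}.$$

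For the two limits I would substitute $u=1/r$ and study $g(u)=2u\,\Gamma(u)^2/\Gamma(2u)$ as $u\to 1^+$ (i.e.\ $r\to 1^+$) and $u\to 0^+$ (i.e.\ $r\to\infty$). As $r\to 1^+$, $u\to 1$, and since $\Gamma(1)=1$, $\Gamma(2)=1$, we get $g(1)=2\cdot 1\cdot 1/1=2$, giving $\lim_{r\to 1+0}\pi_{r'r}=2$. As $r\to\infty$, $u\to 0^+$, and using $\Gamma(u)\sim 1/u$ and $\Gamma(2u)\sim 1/(2u)$ one finds $g(u)=2u\cdot u^{-2}/( (2u)^{-1})=2u\cdot u^{-2}\cdot 2u=4$, so $\lim_{r\to +\infty}\pi_{r'r}=4$. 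These are straightforward once the Gamma-function asymptotics near $0$ are invoked; alternatively the limits can be read off directly from the integral $\tfrac{2}{r}B(1/r,1/r)$ by dominated convergence after the substitution, which avoids Gamma asymptotics entirely.

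The genuine work is proving monotonicity: that $\Phi(r)$ is strictly increasing on $(1,\infty)$. My first approach would be to show it directly from the integral $\pi_{r'r}=2\int_0^1(1-t^r)^{-1/r'}\,dt$ by differentiating under the integral sign in $r$ and proving the resulting integrand has a favorable sign, but the exponent $-1/r'=-(1-1/r)$ depends on $r$ in a way that makes the sign of $\partial_r\big[(1-t^r)^{-1/r'}\big]$ not obviously positive pointwise, so this likely requires a more delicate argument. A cleaner alternative is to use the series/product representation of $\log\Phi$: writing $h(u)=\log g(u)=\log 2+\log u+2\log\Gamma(u)-\log\Gamma(2u)$ and differentiating gives
$$h'(u)=\frac{1}{u}+2\psi(u)-2\psi(2u),$$
where $\psi=\Gamma'/\Gamma$ is the digamma function; one then wants to determine the sign of $h'(u)$ on $(0,1)$ and translate back through $r=1/u$ (noting that $r$ increasing corresponds to $u$ decreasing, so a sign flip occurs). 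Using the reflection-type identity $\psi(2u)-\psi(u)=\tfrac12\psi(u+\tfrac12)-\tfrac12\psi(u)+\log 2$ (the duplication formula for $\psi$) reduces $h'(u)$ to a combination of digamma values whose sign can be controlled via the integral representation $\psi(x)=-\gamma+\int_0^1\frac{1-s^{x-1}}{1-s}\,ds$.

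The main obstacle is precisely this monotonicity step: establishing the correct sign of the digamma combination uniformly on the relevant interval. I expect the convexity/monotonicity properties of $\psi$ (in particular that $\psi$ is increasing and concave on $(0,\infty)$) to be the key analytic input, and the cleanest finish will be to show $\Phi'(r)>0$ by reducing to a single-sign inequality for $\psi(1/r)$ versus $\psi(2/r)$ and verifying it via the integral formula for $\psi$. If the digamma route proves unwieldy, the fallback is to prove monotonicity of $r\mapsto\int_0^1(1-t^r)^{-1/r'}\,dt$ by comparing integrals for $r_1<r_2$ through a clever substitution or a convexity argument on the map $t\mapsto t^r$, exploiting that increasing $r$ both contracts $t^r$ near the endpoints and sharpens the singularity at $t=1$ in the direction that increases the integral.
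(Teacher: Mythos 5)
Your reduction to $\Phi(r)=\tfrac{2}{r}B\left(\tfrac1r,\tfrac1r\right)$ and both limit computations are correct and essentially coincide with the paper's (the paper substitutes $1-t^r=s$ rather than $s=t^r$, but lands on the same identity \eqref{eq:beta}, and reads the limits off $tB(t,t)$ rather than off Gamma asymptotics). The gap is the monotonicity step, which is the real content of the proposition: you reduce it to determining the sign of $h'(u)=\tfrac1u+2\psi(u)-2\psi(2u)$ on $(0,1)$ and then stop, listing the concavity of $\psi$ and two fallbacks as things you \emph{expect} to work. As written, nothing is proved. Worse, the inequality you need is tight at the endpoint: the duplication formula together with $\psi(u)=\psi(u+1)-1/u$ gives $h'(u)=\psi(u+1)-\psi\left(u+\tfrac12\right)-2\log 2$, and since $\psi(1)-\psi\left(\tfrac12\right)=2\log 2$ exactly, $h'(0^+)=0$. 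So any crude bound (e.g.\ the mean-value estimate $\psi(u+1)-\psi(u+\tfrac12)\le\tfrac12\psi'(\tfrac12)=\pi^2/4$) is useless; you must use that $u\mapsto\psi(u+1)-\psi\left(u+\tfrac12\right)$ is \emph{strictly decreasing} (because $\psi'$ is decreasing) together with its exact value $2\log 2$ at $u=0$ to conclude $h'(u)<0$ for $u>0$. With that observation your digamma route does close, but the closing step is precisely what is missing.

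The paper takes a different and shorter path through the monotonicity: it invokes Euler's formula $\log B(s,t)=\log\tfrac{s+t}{st}+\int_0^1\tfrac{(1-v^s)(1-v^t)}{(1-v)\log v}\,dv$, which specializes to $\log\left(tB(t,t)\right)=\log 2+\int_0^1\tfrac{(1-v^t)^2}{(1-v)\log v}\,dv$; since $\log v<0$ the integrand is negative and pointwise decreasing in $t$, so $tB(t,t)$ is decreasing at a glance, and both limits fall out of the same formula ($t\to1$ gives $B(1,1)=1$, $t\to0$ gives $\log 2$). Your first idea, differentiating $\int_0^1(1-t^r)^{-1/r'}\,dt$ under the integral sign, indeed fails pointwise as you suspected (the $r$-derivative of the integrand changes sign on $(0,1)$), and your final fallback about ``competing effects'' near the endpoints is too vague to count as an argument. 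If you want to keep your own framework, finish the digamma computation as above; otherwise Euler's formula is the cleaner tool.
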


\begin{proof}
Putting $1-t^r=s$ in the definition of $\pi_{r'r}$, we have
\begin{equation}
\label{eq:beta}
\pi_{r'r}=\frac{2}{r}B\left(\frac1r,\frac1r\right).
\end{equation}
It suffices to show that 
$tB(t,t)$ is decreasing on $(0,1)$. Using a formula of Euler
(see Example 36 in \cite[p.262]{WW}):
$$\log{B(s,t)}=\log\frac{s+t}{st}+\int_0^1 \frac{(1-v^s)(1-v^t)}{(1-v)\log{v}}\,dv, \quad s,\ t \in (0,\infty),$$
we have
\begin{align*}
\log{tB(t,t)}
&=\log{t}+\log{B(t,t)}\\
&=\log{2}+\int_0^1 \frac{(1-v^t)^2}{(1-v)\log{v}}\,dv,
\quad t \in (0,\infty).
\end{align*}
Clearly, the right-hand side is decreasing in $t$ (note that $\log{v}<0$), 
so that $tB(t,t)$ is also decreasing on $(0,1)$. Furthermore, since
$$\lim_{t \to 1}tB(t,t)=B(1,1)=1 \quad \mbox{and} \quad 
\lim_{t \to 0}\log{tB(t,t)}=\log{2},$$
we obtain the values of limits.
\end{proof}

\begin{rem}
We can find another proof of Proposition \ref{lem:pipp} 
in \cite[Lemma 2.4]{EGL}, in which they use the fact that 
the area of $r$-circle $|x|^r+|y|^r=1$ is $\pi_{r'r}$
(see also \cite{LP}). 
\end{rem}

\subsection{Generalized Jacobian Elliptic Functions}
\label{ssec:GJEF}

In the fashion of the classical Jacobian elliptic functions,
we define new transcendental functions.

Let $p,\ q \in (1,\infty)$.
For any $k \in [0,1)$ we define $K_{pq}(k)$ by
\begin{equation}
\label{eq:maru1}
K_{pq}(k)=\int_0^1 \frac{dt}{(1-t^q)^{\frac1p}(1-k^qt^q)^{\frac{1}{p'}}}.
\end{equation}
Then, for any $k \in [0,1)$ and $x \in [0,K_{pq}(k)]$ 
we define $\sn_{pq}{(x,k)}$ by
\begin{equation}
\label{eq:maru2}
x=\int_0^{\sn_{pq}{(x,k)}} \frac{dt}{(1-t^q)^{\frac1p}(1-k^qt^q)^{\frac{1}{p'}}}.
\end{equation}
Clearly, $\sn_{pq}{(x,k)}$ is an increasing function in $x$ 
from $[0,K_{pq}(k)]$ onto $[0,1]$. 
We extend the domain of $\sn_{pq}{(x,k)}$ to $[0,2K_{pq}(k)]$
by $\sn_{pq}(x,k)=\sn_{pq}(2K_{pq}(k)-x,k)$, and furthermore,
to $\mathbb{R}$ by 
$\sn_{pq}{(x+2K_{pq}(k),k)}=-\sn_{pq}{(x,k)}$, 
so that $\sn_{pq}{(x,k)}$ has
$4K_{pq}(k)$-periodicity. 
We can see that $K_{22}(k)=K(k),\ \sn_{22}{(x,k)}=\sn{(x,k)},\ 
K_{pq}(0)=\pi_{pq}/2$, and $\sn_{pq}{(x,0)}=\sin_{pq}{x}$. Moreover, 
the function $y=\sn_{pq}{(x,k)}$ satisfies that $y,\ |y'|^{p-2}y' \in C^1(\mathbb{R})$,
and $y \in C^2(\mathbb{R})$ if $1<p \leq 2$.

Using $\sn_{pq}{(x,k)}$, for $x \in [0,K_{pq}(k)]$ we also define 
\begin{align*}
\cn_{pq}{(x,k)}&=(1-\sn_{pq}^q{(x,k)})^{\frac1q},\\
\dn_{pq}{(x,k)}&=(1-k^q\sn_{pq}^q{(x,k)})^{\frac1q}.
\end{align*}
Clearly, $\cn_{pq}{(x,k)}$ and $\dn_{pq}{(x,k)}$ are decreasing 
functions in $x$ from $[0,K_{pq}(k)]$ onto $[0,1]$. 
We extend the domains of $\cn_{pq}{(x,k)}$ and $\dn_{pq}{(x,k)}$ 
to $[-K_{pq}(k),K_{pq}(k)]$ in the same way of $\cos_{pq}{x}$, 
and furthermore,
to $\mathbb{R}$ by 
$\cn_{pq}{(x+2K_{pq}(k),k)}=-\cn_{pq}{(x,k)}$ and $\dn_{pq}{(x+2K_{pq}(k),k)}=\dn_{pq}{(x,k)}$, respectively.
This implies that $\cn_{pq}{(x,k)}$ and $\dn_{pq}{(x,k)}$ have $4K_{pq}(k)$- and $2K_{pq}(k)$-periodicity. 
We can see that $\cn_{pq}{(x,0)}=\cos_{pq}{x}$ and $\dn_{pq}{(x,0)}=1$.

These functions satisfy, for $x \in (0,K_{pq}(k))$
\begin{gather*}
\cn_{pq}^q{(x,k)}+\sn_{pq}^q{(x,k)}=1,\\ 
\dn_{pq}^q{(x,k)}+k^q \sn_{pq}^q{(x,k)}=1,\\
(\sn_{pq}{(x,k)})'=\cn_{pq}^{\frac{q}{p}}{(x,k)}\dn_{pq}^{\frac{q}{p'}}{(x,k)},\\
(\cn_{pq}{(x,k)})'=-\sn_{pq}^{q-1}{(x,k)}\cn_{pq}^{1-\frac{q}{p'}}(x,k)
\dn_{pq}^{\frac{q}{p'}}{(x,k)},\\
(\dn_{pq}{(x,k)})'=-k^q\sn_{pq}^{q-1}{(x,k)}\cn_{pq}^{\frac{q}{p}}{(x,k)}
\dn_{pq}^{1-\frac{q}{p}}(x,k).
\end{gather*}
In case $p=q=r$ for some $r \in (1,\infty)$, we write 
$\sn_{pq}(x,k),\ \cn_{pq}(x,k)$ and $\dn_{pq}(x,k)$ by
$\sn_{r}(x,k),\ \cn_{r}(x,k)$ and $\dn_{r}(x,k)$, respectively.
For the case, the formulae above become as follows.
\begin{gather*}
\cn_{r}^r{(x,k)}+\sn_{r}^r{(x,k)}=1,\\ 
\dn_{r}^r{(x,k)}+k^r \sn_{r}^r{(x,k)}=1,\\
(\sn_{r}{(x,k)})'=\cn_{r}{(x,k)}\dn_{r}^{r-1}{(x,k)},\\
(\cn_{r}{(x,k)})'=-\sn_{r}^{r-1}{(x,k)}\cn_{r}^{2-r}(x,k)
\dn_{r}^{r-1}{(x,k)},\\
(\dn_{r}{(x,k)})'=-k^r\sn_{r}^{r-1}{(x,k)}\cn_{r}^{r-1}{(x,k)}.
\end{gather*}
We also state the case $p=r'$ and $q=r$ for some $r \in (1,\infty)$.
\begin{gather*}
\cn_{r'r}^r{(x,k)}+\sn_{r'r}^r{(x,k)}=1,\\ 
\dn_{r'r}^r{(x,k)}+k^r \sn_{r'r}^r{(x,k)}=1,\\
(\sn_{r'r}{(x,k)})'=\cn_{r'r}^{r-1}{(x,k)}\dn_{r'r}{(x,k)},\\
(\cn_{r'r}{(x,k)})'=-\sn_{r'r}^{r-1}{(x,k)}\dn_{r'r}{(x,k)},\\
(\dn_{r'r}{(x,k)})'=-k^r\sn_{r'r}^{r-1}{(x,k)}\cn_{r'r}^{r-1}{(x,k)}
\dn_{r'r}^{2-r}(x,k).
\end{gather*}

Moreover, $y=\sn_{pq}{(x,k)}$ satisfies
\begin{equation*}
(|u'|^{p-2}u')'+\frac{(p-1)q}{p}|u|^{q-2}u
(1+(p-1)k^q-pk^q|u|^q)(1-k^q|u|^q)^{p-2}=0.
\end{equation*}

As mentioned in Introduction, 
the author \cite{T} has introduced another generalized  
Jacobian elliptic functions,
which also include both the Jacobian elliptic functions
and the generalized trigonometric functions.
However, we should note that the definitions above 
of $K_{pq}(k)$ and $\sn_{pq}(x,k)$ are slightly different from those of \cite{T},
in which the common index of $1-k^qt^q$ to \eqref{eq:maru1} and \eqref{eq:maru2}
is not $1/p'$ but $1/p$.
On account of the index, 
$K_{pq}(k)$ has similar asymptotic behavior near $k=1$ as 
$K(k)$, indeed, 
$\disp \lim_{k \to 1} K_{pq}(k)=\infty$ for any $p,\ q \in (1,\infty)$.

To observe the convergence properties 
of generalized Jacobian elliptic functions
as $k \to 1$, we will prepare \textit{generalized hyperbolic functions},
for which similar definitions are seen in \cite{Li}.

For $x \in [0,\infty)$, we define $\sinh_{pq}{x}$ by
\begin{equation}
\label{eq:sinhpq}
x=\int_0^{\sinh_{pq}{x}} \frac{dt}{(1+t^q)^{\frac1p}},
\end{equation}
and extend its domain to $\mathbb{R}$ by $\sinh_{pq}{x}=-\sinh_{pq}{(-x)}$.
Using $\sinh_{pq}{x}$, for $x \in [0,\infty)$, we define
$$\cosh_{pq}{x}=(1+\sinh_{pq}^q{x})^{\frac1q},$$
and extend its domain to $\mathbb{R}$ by $\cosh_{pq}{x}=\cosh_{pq}(-x)$.
The function $\tanh_{pq}{x}$ is defined by
$$\tanh_{pq}{x}=\frac{\sinh_{pq}{x}}{\cosh_{pq}{x}}.$$
We agree that $\sinh_p{x},\ \cosh_p{x}$ and $\tanh_p{x}$ denote
$\sinh_{pp}{x},\ \cosh_{pp}{x}$ and $\tanh_{pp}{x}$ when $p=q$,
respectively.
Putting $p=q$ and $t^p=s^p/(1-s^p)$ in \eqref{eq:sinhpq}, we have
$$x=\int_0^{\tanh_p{x}} \frac{dt}{1-t^p}.$$

Then, it is easy to prove the following properties: 
for any $p,\ q \in (1,\infty)$ and all $x \in \mathbb{R}$,
\begin{gather*}
\lim_{k \to 1} \sn_{pq}(x,k)=\tanh_q{x},\\
\lim_{k \to 1} \cn_{pq}(x,k)=\lim_{k \to 1} \dn_{pq}(x,k)=\frac{1}{\cosh_q{x}}.
\end{gather*}


\section{Properties of $\sn_{pq}(x,k)$ and $K_{pq}(k)$}
\label{sec:PsK}

In this section we observe some properties of generalized Jacobian elliptic 
function $\sn_{pq}(x,k)$ and its 
quarter period $K_{pq}(k)$. 

The function $y=\sn_{pq}(x,k)$ satisfies
that $\sn_{pq}(0,k)=0,\ \sn_{pq}(K_{pq}(k),k)=1$,
$0<\sn_{pq}(x,k)<1$ for $x \in (0,K_{pq}(k))$, 
$y \in C^1[0,K_{pq}(k)]$, and  
$$y'=(1-y^q)^{\frac1p}(1-k^qy^q)^{\frac{1}{p'}} \ge 0.$$
If $1<p \leq 2$, then $y \in C^2[0,K_{pq}(k)]$ and
$$y''=-\frac{q}{p} y^{q-1}(1-y^q)^{\frac2p-1}(1-k^qy^q)^{\frac{2}{p'}-1}
\left((1-k^q)+pk^q(1-y^q)\right) \le 0.$$
When $p>2$, we see that $y'' \in L^1(0,K_{pq}(k))$ and
\begin{equation}
\label{eq:integrability}
\int_0^{K_{pq}(k)} |y''|\,dx=-\int_0^{K_{pq}(k)}y''\,dx
=-[y']_0^{K_{pq}(k)}=1.
\end{equation}

To obtain the estimate of $K_{r'r}(k)$ in Lemma \ref{lem:estimatek} below, 
we state Tchebycheff's integral inequality
in \cite{M,QH}.
 
\begin{lem}
\label{lem:tchebycheff}
Let $f,\ g:[a,b] \to \mathbb{R}$ be integrable functions, both increasing
or both decreasing. Furthermore, let $p:[a,b] \to \mathbb{R}$ be a positive,
integrable function. Then
\begin{equation}
\label{eq:tchebycheff}
\int_a^b p(x)f(x)\,dx \int_a^b p(x)g(x)\,dx
\leq \int_a^b p(x)\,dx \int_a^b p(x)f(x)g(x)\,dx.
\end{equation}
If one of the functions $f$ or $g$ is nonincreasing and the other
nondecreasing, then the inequality in \eqref{eq:tchebycheff} is reversed.
\end{lem}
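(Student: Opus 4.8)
The plan is to use the classical symmetrization trick that reduces the inequality to an obvious sign computation. First I would introduce the double integral
\[
I=\int_a^b\int_a^b p(x)p(y)\bigl(f(x)-f(y)\bigr)\bigl(g(x)-g(y)\bigr)\,dx\,dy,
\]
over the square $[a,b]\times[a,b]$. The whole point is that the integrand has a fixed sign: when $f$ and $g$ are both increasing (or both decreasing), the factors $f(x)-f(y)$ and $g(x)-g(y)$ have the same sign for every pair $(x,y)$, so their product is nonnegative; since $p$ is positive, the integrand is nonnegative and hence $I\ge 0$.

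Next I would expand the product and evaluate $I$ by Fubini's theorem. Multiplying out gives four double integrals, each of which factors as a product of two single integrals because the variables separate. Two of them collapse, by the symmetry $x\leftrightarrow y$, to $\int_a^b p\,dx\int_a^b pfg\,dx$, and the other two collapse to $\int_a^b pf\,dx\int_a^b pg\,dx$, so that
\[
I=2\left(\int_a^b p(x)\,dx\int_a^b p(x)f(x)g(x)\,dx-\int_a^b p(x)f(x)\,dx\int_a^b p(x)g(x)\,dx\right).
\]
Combining this identity with $I\ge 0$ yields \eqref{eq:tchebycheff} immediately. For the reversed case, in which one of $f,g$ is nondecreasing and the other nonincreasing, the factors $f(x)-f(y)$ and $g(x)-g(y)$ have opposite signs, the integrand is nonpositive, $I\le 0$, and the same identity reverses the inequality.

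There is no real analytic obstacle here; the only point requiring a word of care is the legitimacy of Fubini's theorem and the integrability of the various products. This is handled by noting that a monotone function on the compact interval $[a,b]$ is bounded and integrable, so $f$ and $g$ lie in $L^\infty(a,b)$; hence $pf$, $pg$, and $pfg$ are all integrable against the integrable weight $p$, and the integrand of $I$ is absolutely integrable on $[a,b]^2$. With this, Fubini applies and the separation-of-variables step is justified. Thus the entire argument rests on the single observation about the sign of $(f(x)-f(y))(g(x)-g(y))$, and the proof is essentially a two-line computation once the double integral is introduced.
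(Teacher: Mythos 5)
Your proof is correct and complete: the symmetrization identity
\[
\int_a^b\!\!\int_a^b p(x)p(y)\bigl(f(x)-f(y)\bigr)\bigl(g(x)-g(y)\bigr)\,dx\,dy
=2\left(\int_a^b p\,dx\int_a^b pfg\,dx-\int_a^b pf\,dx\int_a^b pg\,dx\right)
\]
together with the pointwise sign of the integrand settles both directions of the inequality, and your remark that monotone functions on a compact interval are bounded (hence all the products are integrable against the integrable weight $p$, justifying Fubini) closes the only technical gap. Note that the paper itself gives no proof of this lemma --- it simply cites Mitrinovi\'{c} and Qi--Huang --- and the argument you give is precisely the standard one recorded there, so there is nothing to compare beyond saying you have supplied the proof the paper omits.
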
 

Concerning the following Lemmas \ref{lem:monotonicity}--\ref{lem:homo}, 
we can refer to \cite{BE,EGL,LE} for 
the corresponding results of $\sin_{pq}{x}$ and $\pi_{pq}$, 
which are in case $k=0$ of $\sn_{pq}(x,k)$ and $K_{pq}(k)$. 
Lemma \ref{lem:estimatek} extends an estimate of $K(k)$ by
Qi and Huang \cite[Eq.(10)]{QH} to that of $K_{r'r}(k)$ for any $r \in (1,\infty)$.

\subsection{Properties of the Function $\sn_{pq}(x,k)$}

Since $f_n(x,k)=f_1(nx,k)$, it suffices to observe properties of 
$f_1(x,k)=\sn_{pq}(2K_{pq}(k)x,k)$ in order to study 
those of \eqref{eq:functions}.



\begin{lem}
\label{lem:monotonicity}
For each $x \in [0,1]$, 

$p \mapsto \sn_{pq}(2K_{pq}(k)x,k)$ is decreasing on $(1,\infty)$ for any fixed 
$k \in [0,1),\ q \in (1,\infty)$.

$q \mapsto \sn_{pq}(2K_{pq}(k)x,k)$ is decreasing on $(1,\infty)$ for any fixed 
$k \in [0,1),\ p \in (1,\infty)$.

$k \mapsto \sn_{pq}(2K_{pq}(k)x,k)$ is increasing on $[0,1)$
for any fixed $p,\ q \in (1,\infty)$.
\end{lem}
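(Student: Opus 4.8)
The plan is to reduce all three assertions to a single sign condition on the logarithmic derivative of the integrand, so that the same mechanism handles $p$, $q$ and $k$ at once. First I note that, by the reflection $\sn_{pq}(x,k)=\sn_{pq}(2K_{pq}(k)-x,k)$, the function $x\mapsto\sn_{pq}(2K_{pq}(k)x,k)$ is symmetric about $x=1/2$ on $[0,1]$ and takes the parameter-independent values $0,1,0$ at $x=0,1/2,1$; hence it suffices to work on $x\in(0,1/2)$. Writing $w(t)=(1-t^q)^{-1/p}(1-k^qt^q)^{-1/p'}$ and
\[
F(y)=\frac{\int_0^y w(t)\,dt}{\int_0^1 w(t)\,dt},
\]
relations \eqref{eq:maru1} and \eqref{eq:maru2} say that $y=\sn_{pq}(2K_{pq}(k)x,k)$ is exactly the solution of $F(y)=2x$. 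Since $F$ is strictly increasing in $y$ (with $F_y=w(y)/\int_0^1w>0$), implicit differentiation in a parameter $\lambda\in\{p,q,k\}$ shows that the sign of $\partial_\lambda y$ is opposite to that of $\partial_\lambda F$ at fixed $y\in(0,1)$. Each claim thus becomes a statement about $\operatorname{sgn}(\partial_\lambda F)$.

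The key step is to express $\partial_\lambda F$ as a covariance. Setting $h_\lambda(t)=\partial_\lambda\log w(t)$ and $d\mu=w\,dt/\int_0^1 w$, differentiation under the integral sign gives $\partial_\lambda F=\operatorname{Cov}_\mu(\mathbf 1_{[0,y]},h_\lambda)$. Because $\mathbf 1_{[0,y]}$ is nonincreasing in $t$, Tchebycheff's inequality (Lemma \ref{lem:tchebycheff}, applied with weight $p(t)=w(t)$, whose covariance form is precisely this statement) yields $\partial_\lambda F\le 0$ whenever $h_\lambda$ is increasing in $t$, and $\partial_\lambda F\ge 0$ whenever $h_\lambda$ is decreasing in $t$. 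Everything therefore reduces to determining the monotonicity in $t$ of $h_\lambda$.

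The cases $p$ and $k$ are then immediate. For $p$, taking $s=1/p$ as variable and using $w=(1-k^qt^q)^{-1}\bigl((1-k^qt^q)/(1-t^q)\bigr)^{s}$ gives $h_s=\log\frac{1-k^qt^q}{1-t^q}$, which is increasing in $t$ (the ratio is increasing, as $\frac{d}{du}\frac{1-k^qu}{1-u}=\frac{1-k^q}{(1-u)^2}>0$ with $u=t^q$); hence $F$ is increasing in $p$ and $\sn_{pq}$ is decreasing in $p$. For $k$, with $c=k^q$ one finds $h_c=\frac{1}{p'}\,\frac{t^q}{1-ct^q}$, again increasing in $t$, so $F$ is decreasing in $k$ and $\sn_{pq}$ is increasing in $k$.

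The hard case is $q$, where the parameter appears inside the exponents in two places. A direct computation gives
\[
h_q=\frac{1}{pq}\,\psi(t^q)+\frac{1}{p'q}\,\psi((kt)^q),\qquad \psi(u)=\frac{u\log u}{1-u},
\]
so the whole matter rests on showing $\psi$ is decreasing on $(0,1)$. This is the crux: one computes $\psi'(u)=(\log u+1-u)/(1-u)^2$, and since $\eta(u)=\log u+1-u$ satisfies $\eta(1)=0$ and $\eta'(u)=1/u-1>0$ on $(0,1)$, we get $\eta<0$ and hence $\psi'<0$. As $t^q$ and $(kt)^q$ are increasing in $t$, $h_q$ is a positive combination of decreasing functions, hence decreasing, so $F$ is increasing in $q$ and $\sn_{pq}$ is decreasing in $q$. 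I expect the main obstacle to be exactly this last reduction — isolating the auxiliary function $\psi$ and proving its monotonicity — together with the routine justification of differentiating under the integral sign, for which one checks that $w$ and $w\,|h_\lambda|$ are integrable near $t=1$ (the singularity there is $(1-t^q)^{-1/p}$ times an at-most-logarithmic or bounded factor).
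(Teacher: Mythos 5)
Your argument is correct, but it is a genuinely different route from the paper's. The paper fixes two parameter values (say $1<p<r$) and studies the ratio $f(t)=\sn_{rq}^{-1}(t,k)/\sn_{pq}^{-1}(t,k)$ of the inverse functions, showing $f'<0$ via the monotonicity of the ratio $g(t)$ of the two integrands; this yields the ordering of the normalized inverses $\sn_{pq}^{-1}(t,k)/K_{pq}(k)\le\sn_{rq}^{-1}(t,k)/K_{rq}(k)$ and hence the reverse ordering of the functions, with the $q$- and $k$-cases dispatched by choosing the appropriate $g$. You instead differentiate the normalized inverse $F(y)=\int_0^y w/\int_0^1 w$ in the parameter, identify $\partial_\lambda F$ as the covariance of $\mathbf 1_{[0,y]}$ and $\partial_\lambda\log w$ under the weight $w$, and invoke Tchebycheff's inequality (Lemma \ref{lem:tchebycheff}, which the paper states but uses only for Lemma \ref{lem:estimatek}). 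The two mechanisms are close cousins --- the paper's ``ratio of integrals is monotone when the ratio of integrands is'' is essentially the discrete, two-point version of your correlation inequality --- but yours buys a uniform treatment of all three parameters through a single sign condition on $h_\lambda=\partial_\lambda\log w$, at the cost of justifying differentiation under the integral sign (which you do address). Notably, your explicit analysis of the $q$-case via $\psi(u)=u\log u/(1-u)$ fills in a step the paper leaves implicit: verifying that the paper's $g(t)=\bigl(\tfrac{1-t^r}{1-t^q}\bigr)^{1/p}\bigl(\tfrac{1-k^rt^r}{1-k^qt^q}\bigr)^{1/p'}$ is monotone for $q<r$ reduces to essentially the same computation with $\psi$, so your write-up is, if anything, more complete there. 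One cosmetic point: your sign conditions give only non-strict monotonicity in the parameter (as they must, since the values at $x=0,\tfrac12,1$ are parameter-independent), which is the same weak sense in which the paper's final displayed inequality and all subsequent applications (e.g.\ Lemma \ref{lem:homo}) use the result.
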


\begin{proof}
First we will show that $\sn_{pq}(2K_{pq}(k)x,k)$ is decreasing in 
$p \in (1,\infty)$.

Let $1<p<r<\infty$. Putting
$$f(t)=\frac{\sn_{rq}^{-1}(t,k)}{\sn_{pq}^{-1}(t,k)}
 \quad \mbox{for}\ t \in (0,1],$$ 
we have
$$f'(t)=\frac{G(t)}{(\sn_{pq}^{-1}(t,k))^2 (1-t^q)^{\frac1r}(1-k^qt^q)^{\frac{1}{r'}}},$$
where
$$G(t)=\sn_{pq}^{-1}(t,k)-g(t)\sn_{rq}^{-1}(t,k)$$
and 
$$g(t)=(1-t^q)^{\frac1r-\frac1p}(1-k^qt^q)^{\frac{1}{r'}-\frac{1}{p'}}
=\left(\frac{1-k^qt^q}{1-t^q}\right)^{\frac1p-\frac1r}.$$
Since $g(t)$ is increasing in $t \in (0,1)$ when $p<r$, 
it is easy to see that
$$G'(t)=-g'(t)\sn_{rq}^{-1}(t,k)<0,$$
which means that $G(t)<0$, i.e, $f'(t)<0$ for each $t \in (0,1]$. Thus,
$$\frac{K_{rq}(k)}{K_{pq}(k)} \le \frac{\sn_{rq}^{-1}(t,k)}{\sn_{pq}^{-1}(t,k)}
<1 \quad \mbox{for}\ t \in (0,1],$$
namely,
\begin{gather*}
\sn_{rq}^{-1}(t,k)<\sn_{pq}^{-1}(t,k),\\
\frac{1}{K_{pq}(k)}\sn_{pq}^{-1}(t,k) 
\le \frac{1}{K_{rq}(k)}\sn_{rq}^{-1}(t,k)
 \quad \mbox{for}\ t \in (0,1].
\end{gather*}
Therefore we conclude that
\begin{equation*}
\sn_{pq}(2K_{pq}(k)x,k) \ge \sn_{rq}(2K_{rq}(k)x,k)
 \quad \mbox{for}\ x \in [0,1],
\end{equation*}
so that $\sn_{pq}(2K_{pq}(k)x,k)$
is decreasing in $p>1$.

The assertions for $q$ and $k$ are proved in a similar way. 
It is enough to replace $f(t)$ by
$$\frac{\sn_{pr}^{-1}(t,k)}{\sn_{pq}^{-1}(t,k)}\ (1<q<r) \quad \mbox{and} \quad
\frac{\sn_{pq}^{-1}(t,l)}{\sn_{pq}^{-1}(t,k)}\ (0\leq k<l<1),$$
respectively, and to replace $g(t)$ by
$$\left(\frac{1-t^r}{1-t^q}\right)^{\frac1p}
\left(\frac{1-k^rt^r}{1-k^qt^q}\right)^{\frac{1}{p'}}
\quad \mbox{and} \quad
\left(\frac{1-l^qt^q}{1-k^qt^q}\right)^{\frac{1}{p'}},$$
respectively.
\end{proof}

\begin{lem}
\label{lem:convexity}
Let $p,\ q \in (1,\infty)$ and $k \in [0,1)$. Then, for any 
$x \in (0,K_{pq}(k)]$,
$$\frac{1}{K_{pq}(k)} \leq \frac{\sn_{pq}(x,k)}{x}<1.$$
\end{lem}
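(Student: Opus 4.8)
The plan is to reduce the two–sided estimate to the concavity of the map $x \mapsto \sn_{pq}(x,k)$ on $[0,K_{pq}(k)]$. Writing $y=\sn_{pq}(x,k)$, the defining relation \eqref{eq:maru2} gives $y(0)=0$ and
$$y'=(1-y^q)^{\frac1p}(1-k^qy^q)^{\frac{1}{p'}},$$
so that $y'(0)=1$ and $y$ increases from $0$ to $1$ as $x$ runs over $[0,K_{pq}(k)]$, with $y(K_{pq}(k))=1$.

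First I would establish that $y$ is strictly concave. Rather than appeal to the formula for $y''$, which is only guaranteed for $1<p\le 2$ and which blows up near the right endpoint when $p>2$, I would argue directly from the first-order relation above. Since $y$ is increasing in $x$ and takes values in $(0,1)$ on the interior, each factor $(1-y^q)^{1/p}$ and $(1-k^qy^q)^{1/p'}$ is positive and strictly decreasing in $x$ on $(0,K_{pq}(k))$; hence their product $y'$ is a product of two positive strictly decreasing functions, and is therefore itself strictly decreasing there. A function in $C^1[0,K_{pq}(k)]$ whose derivative is strictly decreasing on the interior is strictly concave, so $y$ is strictly concave on $[0,K_{pq}(k)]$. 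This step, which treats all $p\in(1,\infty)$ uniformly and avoids second derivatives altogether, is the only point that requires care.

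Next I would read off both bounds from concavity by considering the secant slope from the origin,
$$h(x)=\frac{\sn_{pq}(x,k)}{x}=\frac{\sn_{pq}(x,k)-\sn_{pq}(0,k)}{x-0}.$$
For a concave function vanishing at $0$ the secant slope $h(x)$ is strictly decreasing in $x$: indeed, for $0<x_1<x_2\le K_{pq}(k)$, strict concavity and $y(0)=0$ give $y(x_1)>\tfrac{x_1}{x_2}\,y(x_2)$, i.e. $h(x_1)>h(x_2)$ (equivalently, $h'(x)=(y'x-y)/x^2<0$, using $y'(x)<y(x)/x$). Consequently $h$ attains its minimum on $(0,K_{pq}(k)]$ at the right endpoint, where $h(K_{pq}(k))=1/K_{pq}(k)$; this yields the lower bound $\sn_{pq}(x,k)/x\ge 1/K_{pq}(k)$, with equality at $x=K_{pq}(k)$, matching the non-strict inequality in the statement. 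For the upper bound, letting $x\to 0+$ gives $h(x)\to y'(0)=1$, so by strict monotonicity $h(x)<1$ for every $x\in(0,K_{pq}(k)]$; equivalently, strict concavity forces the graph of $y$ to lie strictly below its tangent line $y=x$ at the origin. Combining the two gives $1/K_{pq}(k)\le \sn_{pq}(x,k)/x<1$, as claimed. Once concavity is secured uniformly in $p$, the remaining estimates are immediate from the monotonicity of the secant slope, so I expect no further obstacle.
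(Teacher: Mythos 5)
Your proof is correct, but it takes a genuinely different route from the paper. The paper's argument is a one-line change of variables: substituting $t=ys$ in the defining integral \eqref{eq:maru2} gives
$$x=\sn_{pq}(x,k)\int_0^1\frac{ds}{(1-s^q\sn_{pq}^q(x,k))^{\frac1p}(1-k^qs^q\sn_{pq}^q(x,k))^{\frac1{p'}}},$$
and since the integral on the right is strictly greater than $1$ and at most its value at $\sn_{pq}=1$, namely $K_{pq}(k)$, both bounds drop out at once with no differentiation at all. You instead derive strict concavity of $x\mapsto\sn_{pq}(x,k)$ from the first-order relation $y'=(1-y^q)^{1/p}(1-k^qy^q)^{1/p'}$ and then read off both bounds from the monotonicity of the secant slope $y(x)/x$; your care in avoiding $y''$ (which is only integrable, not bounded, for $p>2$) is well placed, and your argument yields the extra information that $\sn_{pq}(x,k)/x$ is strictly decreasing, which the paper's proof does not state. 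One cosmetic slip: when $k=0$ the factor $(1-k^qy^q)^{1/p'}$ is identically $1$, so it is not \emph{strictly} decreasing as you assert; the product $y'$ is still strictly decreasing because the other factor is, so the conclusion stands, but you should phrase that step as ``one factor is strictly decreasing and the other is positive and nonincreasing.'' Overall the paper's substitution is shorter and stays entirely inside the integral definition, while your concavity argument makes the geometric content explicit and is the more transferable technique.
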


\begin{proof}
Let $y=\sn_{pq}(x,k)$. Putting $t=ys$ in \eqref{eq:maru2}, we have
$$\sn_{pq}^{-1}(y,k)=\int_0^1 \frac{y\,ds}{(1-y^qs^q)^{\frac1p}(1-k^qy^qs^q)^{\frac{1}{p'}}},$$
so that 
$$x=\sn_{pq}(x,k) \int_0^1 
\frac{ds}{(1-s^q\sn_{pq}^q(x,k))^{\frac1p}
(1-k^qs^q\sn_{pq}^q(x,k))^{\frac{1}{p'}}}.$$
Since $0<\sn_{pq}(x,k) \le 1$, we obtain
$$\sn_{pq}(x,k) < x \leq K_{pq}(k)\sn_{pq}(x,k),$$
which implies the assertion.
\end{proof}





\subsection{Properties of the Number $K_{pq}(k)$}


\begin{lem}
\label{lem:kpq}
$p \mapsto K_{pq}(k)$ is decreasing on $(1,\infty)$ for any fixed 
$k \in [0,1),\ q \in (1,\infty)$.

$q \mapsto K_{pq}(k)$ is decreasing on $(1,\infty)$ for any fixed 
$k \in [0,1),\ p \in (1,\infty)$.

$k \mapsto K_{pq}(k)$ is increasing on $[0,1)$ and
$$K_{pq}(0)=\frac{\pi_{pq}}{2},\quad
\lim_{k \to 1}K_{pq}(k)=\infty$$
for any fixed $p,\ q \in (1,\infty)$.
\end{lem}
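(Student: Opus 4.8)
The plan is to read off all three monotonicity assertions directly from the integral representation \eqref{eq:maru1}, handling the two easy directions ($k$ and $p$) by a pointwise comparison of integrands and the genuinely delicate direction ($q$) by reusing the mechanism already built in the proof of Lemma \ref{lem:monotonicity}. Throughout I would exploit the identity $K_{pq}(k)=\sn_{pq}^{-1}(1,k)$, so that any pointwise inequality for $\sn_{pq}^{-1}(t,k)$ that persists up to $t=1$ transfers verbatim to $K_{pq}(k)$. Note that the integral \eqref{eq:maru1} is finite for every $k\in[0,1)$, since near $t=1$ the factor $(1-k^qt^q)^{-1/p'}$ stays bounded and $(1-t^q)^{-1/p}$ is integrable because $1/p<1$.

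First I would dispose of the dependence on $k$. For fixed $t\in(0,1)$ the integrand $(1-t^q)^{-1/p}(1-k^qt^q)^{-1/p'}$ is strictly increasing in $k\in[0,1)$, because $1-k^qt^q$ decreases in $k$ while the exponent $-1/p'$ is negative; comparing the nonnegative (improper) integrals for $k<l$ then yields $K_{pq}(k)<K_{pq}(l)$. Setting $k=0$ collapses the second factor to $1$, so $K_{pq}(0)=\int_0^1(1-t^q)^{-1/p}\,dt=\pi_{pq}/2$ by the very definition of $\pi_{pq}$. For the limit I would argue by monotone convergence along a sequence $k_n\uparrow 1$: the integrands increase pointwise on $(0,1)$ to $(1-t^q)^{-1/p}(1-t^q)^{-1/p'}=(1-t^q)^{-1}$, whose integral diverges at $t=1$, whence $\disp\lim_{k\to1}K_{pq}(k)=\infty$.

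Monotonicity in $p$ likewise follows from a pointwise estimate. Writing $1/p'=1-1/p$, the logarithm of the integrand equals $-\log(1-k^qt^q)+\frac1p\log\frac{1-k^qt^q}{1-t^q}$, and since $1-k^qt^q\ge 1-t^q$ on $(0,1)$ the coefficient of $1/p$ is nonnegative; as $p$ increases $1/p$ decreases, so the integrand decreases pointwise and therefore $K_{pq}(k)$ decreases in $p$.

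The main obstacle is monotonicity in $q$: here $q$ enters the integrand simultaneously through $t^q$, through $k^q$, and through the exponents, and no elementary pointwise comparison is available. Instead I would invoke the ratio argument already carried out for Lemma \ref{lem:monotonicity}. That proof in fact establishes, for $1<q<r$, the pointwise inequality $\sn_{pr}^{-1}(t,k)<\sn_{pq}^{-1}(t,k)$ for all $t\in(0,1]$; evaluating at $t=1$ gives $K_{pr}(k)<K_{pq}(k)$, i.e. $K_{pq}(k)$ is decreasing in $q$. The same device at $t=1$ reproves the $p$- and $k$-directions as well, so one could alternatively deduce all three monotonicities uniformly from Lemma \ref{lem:monotonicity}; I prefer the direct integrand arguments above wherever they apply, since they are shorter. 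It then remains only to collect the four displayed conclusions, the single subtle point being the interchange of limit and integral in the $k\to1$ blow-up, which the monotone convergence theorem settles cleanly given the monotonicity in $k$ established first.
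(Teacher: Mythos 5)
Your proof is correct, and for the $p$- and $k$-directions it coincides with what the paper does: the paper's entire argument for $p$ is the rewriting $K_{pq}(k)=\int_0^1 \left(\frac{1-k^qt^q}{1-t^q}\right)^{1/p}\frac{dt}{1-k^qt^q}$ with base $\geq 1$, which is exactly your logarithmic computation, and the paper dismisses the remaining parts with ``also follow from the form of $K_{pq}(k)$'' (your monotone-convergence treatment of $\lim_{k\to 1}K_{pq}(k)=\infty$ is a welcome amplification of that). The one place you genuinely diverge is the $q$-direction, and there your premise is mistaken: you assert that no elementary pointwise comparison is available because ``$q$ enters through the exponents,'' but in \eqref{eq:maru1} the exponents are $-1/p$ and $-1/p'$ and do not involve $q$ at all; $q$ enters only through $t^q$ and $(kt)^q$, both of which are decreasing in $q$ for $t\in(0,1)$, so $1-t^q$ and $1-k^qt^q$ increase in $q$ and the integrand is pointwise decreasing in $q$. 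This is surely the comparison the paper intends. Your fallback---extracting the inequality $\sn_{pr}^{-1}(t,k)<\sn_{pq}^{-1}(t,k)$ from the proof (not the statement) of Lemma \ref{lem:monotonicity} and evaluating at $t=1$---is nevertheless valid and non-circular, since that lemma precedes this one and its proof already records $K_{rq}(k)/K_{pq}(k)\le \sn_{rq}^{-1}(t,k)/\sn_{pq}^{-1}(t,k)<1$; it just costs you the full ratio/derivative machinery where a one-line monotone comparison of integrands suffices.
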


\begin{proof}
The assertion on $p$ immediately follows from
$$K_{pq}(k)=\int_0^1 \left(\frac{1-k^qt^q}{1-t^q}\right)^{\frac1p}\frac{1}{1-k^qt^q}\,dt.$$
The remaining parts also follow from the form of $K_{pq}(k)$.
\end{proof}

\begin{lem}
\label{lem:symmetry}
Let $p,\ q \in (1,\infty)$ and $k \in [0,1)$. Then
$$K_{pq}(k)=\frac{p'}{q}K_{q'p'}(k^{\frac{q}{p'}}).$$
\end{lem}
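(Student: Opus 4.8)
The plan is to prove the identity by an explicit change of variables in the defining integral \eqref{eq:maru1}. First I would write out the right-hand side completely. Replacing the index pair $(p,q)$ by $(q',p')$ and the modulus by $l=k^{q/p'}$ in \eqref{eq:maru1}, and using $(q')'=q$ together with $l^{p'}=k^q$, one obtains
\begin{equation*}
K_{q'p'}(k^{\frac{q}{p'}})
=\int_0^1 \frac{ds}{(1-s^{p'})^{\frac{1}{q'}}(1-k^qs^{p'})^{\frac1q}}.
\end{equation*}
Comparing this with $K_{pq}(k)=\int_0^1 dt/((1-t^q)^{1/p}(1-k^qt^q)^{1/p'})$, the task is to transport the inner power from $s^{p'}$ to $t^q$ while converting the exponent pair $(1/q',1/q)$ into $(1/p,1/p')$.

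The decisive step, and the main obstacle, is guessing the correct substitution; once it is in hand the remainder is bookkeeping. I would use
\begin{equation*}
s^{p'}=\frac{1-t^q}{1-k^qt^q},
\end{equation*}
which maps $t\in[0,1]$ decreasingly onto $s\in[0,1]$ (so $t=0\mapsto s=1$ and $t=1\mapsto s=0$). The point of this choice is that the two awkward factors linearize: a short computation gives
\begin{gather*}
1-s^{p'}=\frac{t^q(1-k^q)}{1-k^qt^q},\\
1-k^qs^{p'}=\frac{1-k^q}{1-k^qt^q}.
\end{gather*}
Hence the product $(1-s^{p'})^{1/q'}(1-k^qs^{p'})^{1/q}$ collapses, using $1/q'+1/q=1$, to $t^{q/q'}(1-k^q)/(1-k^qt^q)$.

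It then remains to compute the Jacobian and verify that all powers cancel. Differentiating the substitution yields $p's^{p'-1}\,ds=-(1-k^q)(1-k^qt^q)^{-2}\,q t^{q-1}\,dt$, and one reassembles the integrand. The factor $t^{q-1}/t^{q/q'}$ reduces to $1$ because $q/q'=q-1$, while $s^{-(p'-1)}=((1-k^qt^q)/(1-t^q))^{1/p}$ turns the remaining $(1-k^qt^q)$-power into exponent $1/p-1=-1/p'$. After these cancellations one finds
\begin{equation*}
\frac{ds}{(1-s^{p'})^{\frac{1}{q'}}(1-k^qs^{p'})^{\frac1q}}
=-\frac{q}{p'}\,\frac{dt}{(1-t^q)^{\frac1p}(1-k^qt^q)^{\frac1{p'}}}.
\end{equation*}
Integrating and using that the substitution reverses orientation absorbs the minus sign, giving $K_{q'p'}(k^{q/p'})=\frac{q}{p'}K_{pq}(k)$, which rearranges to the claimed formula. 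The only genuinely delicate point is producing the substitution; the algebra afterwards is routine, relying solely on the conjugacy relations $1/p+1/p'=1$ and $1/q+1/q'=1$.
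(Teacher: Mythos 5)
Your proof is correct and follows essentially the same route as the paper: both establish the identity by a change of variables in the defining integral, and your single substitution $s^{p'}=(1-t^q)/(1-k^qt^q)$ is exactly the composite of the paper's two successive substitutions ($1-t^q=x^{p'}$ followed by $x=s(1+\kappa(1-s^{p'}))^{-1/p'}$ with $\kappa=k^q/(1-k^q)$). All of your intermediate identities and the Jacobian computation check out.
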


\begin{proof}
Putting $1-t^q=x^{p'}$ in \eqref{eq:maru1}, we have
$$K_{pq}(k)=\frac{p'}{q(1-k^q)^{\frac{1}{p'}}}
\int_0^1 \frac{dx}{(1-x^{p'})^{\frac{1}{q'}}(1+\kappa x^{p'})^{\frac{1}{p'}}},$$
where $\kappa=k^q/(1-k^q)$.
Furthermore, setting $x=t(1+\kappa (1-t^{p'}))^{-\frac{1}{p'}}$, we obtain
\begin{align*}
K_{pq}(k)
&=\frac{p'}{q(1-k^q)^{1-\frac{1}{q'}}}
\int_0^1 \frac{dt}{(1-t^q)^{\frac{1}{q'}}(1+\kappa (1-t^{p'}))^{1-\frac{1}{q'}}}\\
&=\frac{p'}{q}\int_0^1 \frac{dt}{(1-t^{p'})^{\frac{1}{q'}}(1-k^qt^{p'})^{\frac1q}}.
\end{align*}
The integration of the right-hand side is equal to $K_{q'p'}(k^{\frac{q}{p'}})$. 
\end{proof}


\begin{lem}
\label{lem:homo}
Let $p,\ q \in (1,\infty)$ and $k \in [0,1)$. Then
\begin{gather*}
K_{pq}(k) \leq \frac{r}{q}K_{r'r}(k^{\frac{q}{r}}),
\end{gather*}
where $r=\max\{p',q\}$.
\end{lem}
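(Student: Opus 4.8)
The plan is to combine the monotonicity of $K_{pq}(k)$ in its two indices (Lemma \ref{lem:kpq}) with the reflection formula of Lemma \ref{lem:symmetry}, splitting into the two cases dictated by $r=\max\{p',q\}$. In each case the goal is to reduce the asserted inequality to a single comparison of $K$-values in which one index is moved in the direction that monotonicity handles.

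First I would dispose of the case $r=q$, i.e.\ $p'\le q$. Here $r/q=1$ and $k^{q/r}=k$, so the claim collapses to $K_{pq}(k)\le K_{q'q}(k)$. Since $p'\le q$ is equivalent to $p\ge q'$ (taking reciprocals, $p'\le q \Leftrightarrow 1/p\le 1/q' \Leftrightarrow p\ge q'$), and since $p\mapsto K_{pq}(k)$ is decreasing by Lemma \ref{lem:kpq}, enlarging the first index from $q'$ to $p$ can only decrease $K$, which gives the inequality immediately.

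The case $r=p'$, i.e.\ $p'\ge q$, is where the symmetry formula does the real work. A short index computation gives $r'=p$, so the right-hand side of the lemma is $\frac{p'}{q}K_{pp'}(k^{q/p'})$. Applying Lemma \ref{lem:symmetry} to the left-hand side, $K_{pq}(k)=\frac{p'}{q}K_{q'p'}(k^{q/p'})$, the claim becomes $K_{q'p'}(k^{q/p'})\le K_{pp'}(k^{q/p'})$, now a comparison at a \emph{common} second index $p'$ and a common modulus. Because $p'\ge q$ is equivalent to $p\le q'$, and $K_{\cdot\,p'}$ is decreasing in its first index, this follows from Lemma \ref{lem:kpq}. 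The two cases meet at $p'=q$, where both sides are equal.

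I expect the crux to be the bookkeeping in the second case rather than any substantive estimate: one must verify the index identity $r'=p$ and, more importantly, check that the modulus $k^{q/p'}$ produced by Lemma \ref{lem:symmetry} coincides exactly with the modulus $k^{q/r}$ appearing on the right-hand side. This alignment is precisely what lets the reflection formula convert a situation where the ``wrong'' index is large into a clean monotonicity comparison; once the moduli and second indices match, invoking Lemma \ref{lem:kpq} is routine.
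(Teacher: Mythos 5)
Your proof is correct and follows essentially the same route as the paper's (very terse) proof: monotonicity of $K$ in its first index settles the case $p'\le q$, and Lemma \ref{lem:symmetry} reduces the case $p'>q$ to the same monotonicity comparison. The only cosmetic difference is that you cite Lemma \ref{lem:kpq} for the monotonicity of $K_{pq}(k)$ in $p$, whereas the paper points to Lemma \ref{lem:monotonicity} (whose proof contains that fact); your citation is arguably the cleaner one.
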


\begin{proof}
Case $p' \leq q$ follows from only Lemma \ref{lem:monotonicity}.
Case $p' > q$ is also proved similarly after using Lemma \ref{lem:symmetry}.  
\end{proof}

\begin{lem}
\label{lem:estimatek}
Let $r \in (1,\infty)$ and $k \in (0,1)$. Then
\begin{equation}
\frac{\pi_{r'r}}{2}\frac{\sin^{-1}_r{k}}{k}
\leq K_{r'r}(k)
\leq \frac{\pi_{r'r}}{2}\frac{\tanh^{-1}_r{k}}{k}
\leq \frac{\pi_{r'r}}{2}\frac{1}{(1-k^r)^{\frac1r}}.
\label{eq:estimatek}
\end{equation}
\end{lem}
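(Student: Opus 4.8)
The plan is to treat the three inequalities in \eqref{eq:estimatek} separately, using Tchebycheff's inequality (Lemma \ref{lem:tchebycheff}) with the constant weight $p\equiv1$ for the first two and an elementary estimate for the last. Throughout I work with the representation
\[
K_{r'r}(k)=\int_0^1 (1-t^r)^{-1/r'}(1-k^rt^r)^{-1/r}\,dt,
\]
obtained by putting $p=r'$, $q=r$ (so that $p'=r$) in \eqref{eq:maru1}, together with the identities $\pi_{r'r}/2=\int_0^1(1-t^r)^{-1/r'}\,dt$ and, after the scaling $t\mapsto kt$, $\sin_r^{-1}k/k=\int_0^1(1-k^rt^r)^{-1/r}\,dt$ and $\tanh_r^{-1}k/k=\int_0^1(1-k^rt^r)^{-1}\,dt$.

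For the lower bound I would apply Tchebycheff directly to the displayed integral: on $[0,1]$ both factors $(1-t^r)^{-1/r'}$ and $(1-k^rt^r)^{-1/r}$ are increasing, so the non-reversed inequality \eqref{eq:tchebycheff} with $p\equiv1$ (note $\int_0^1 1\,dt=1$) gives
\[
\int_0^1(1-t^r)^{-1/r'}\,dt\int_0^1(1-k^rt^r)^{-1/r}\,dt\le\int_0^1(1-t^r)^{-1/r'}(1-k^rt^r)^{-1/r}\,dt,
\]
which is exactly $\tfrac{\pi_{r'r}}{2}\,\tfrac{\sin_r^{-1}k}{k}\le K_{r'r}(k)$.

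The middle inequality is the crux, because the two factors above are \emph{both} increasing and Tchebycheff can bound such a product only from below. The idea is first to change variables by $t=(1-s^r)^{1/r}$, a decreasing bijection of $[0,1]$. Since $1-t^r=s^r$ and a short computation gives $(1-t^r)^{-1/r'}\,dt=-(1-s^r)^{-1/r'}\,ds$, this recasts the integral as
\[
K_{r'r}(k)=\int_0^1(1-s^r)^{-1/r'}\bigl((1-k^r)+k^rs^r\bigr)^{-1/r}\,ds,
\]
in which the first factor is increasing but the second is now \emph{decreasing}. Tchebycheff in its reversed form then yields
\[
K_{r'r}(k)\le\frac{\pi_{r'r}}{2}\int_0^1\bigl((1-k^r)+k^rs^r\bigr)^{-1/r}\,ds.
\]
It remains to identify the last integral with $\tanh_r^{-1}k/k$: writing $\kappa=k(1-k^r)^{-1/r}$ and scaling shows it equals $k^{-1}\sinh_r^{-1}\kappa$, and the generalized hyperbolic identities $\cosh_r(\tanh_r^{-1}k)=(1-k^r)^{-1/r}$ together with $\sinh_r=\tanh_r\cosh_r$ give $\sinh_r^{-1}\kappa=\tanh_r^{-1}k$. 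I expect the main obstacle to be precisely this combination: spotting the monotonicity-reversing substitution and then recognizing the resulting elementary integral as $\tanh_r^{-1}k/k$ through the $\sinh_r$--$\tanh_r$ relation.

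Finally, for the rightmost inequality I would set $u=\tanh_r^{-1}k$, so that $k=\tanh_r u$ and $\cosh_r u=(1-k^r)^{-1/r}$; then $\tfrac{\tanh_r^{-1}k}{k}\le(1-k^r)^{-1/r}$ is equivalent to $u\le\tanh_r u\,\cosh_r u=\sinh_r u$, which follows at once from $\sinh_r^{-1}x=\int_0^x(1+w^r)^{-1/r}\,dw\le x$.
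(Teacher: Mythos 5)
Your proof is correct, and it rests on the same key tool as the paper's, namely Tchebycheff's integral inequality (Lemma \ref{lem:tchebycheff}) with constant weight, but it deploys that tool differently. The paper first substitutes $t=\sin_{r'r}\theta$ to write $K_{r'r}(k)=\int_0^{\pi_{r'r}/2}(1-k^r\sin_{r'r}^r\theta)^{-1/r}\,d\theta$ and then pairs this (increasing) integrand with the auxiliary functions $\cos_{r'r}^{r-1}\theta$ (decreasing, giving the lower bound) and $\sin_{r'r}^{r-1}\theta$ (increasing, giving the upper bound), both of which integrate to $1$ over $[0,\pi_{r'r}/2]$; the resulting product integrals are evaluated as $\sin_r^{-1}k/k$ and $\tanh_r^{-1}k/k$ by explicit substitutions. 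You instead take $f$ and $g$ to be the two factors of the integrand of $K_{r'r}(k)$ itself (both increasing, hence the lower bound), and for the upper bound you use the substitution $t=(1-s^r)^{1/r}$ to reverse the monotonicity of one factor before applying the reversed inequality; I checked that $(1-t^r)^{-1/r'}\,dt=-(1-s^r)^{-1/r'}\,ds$ and the rest of the computation, and it is sound. Your identification of $\int_0^1\bigl((1-k^r)+k^rs^r\bigr)^{-1/r}\,ds$ with $\tanh_r^{-1}k/k$ via $\sinh_r^{-1}\bigl(k(1-k^r)^{-1/r}\bigr)=\tanh_r^{-1}k$ is a clean alternative to the paper's direct substitution $t^r=\frac{1-k^r}{k^r}\frac{s^r}{1-s^r}$, and your final step (reducing the third inequality to $u\le\sinh_r u$) replaces the paper's pointwise bound $\bigl((1-k^r)+k^rt^r\bigr)^{-1/r}\le(1-k^r)^{-1/r}$, which is marginally more direct. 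The two arguments are equivalent up to the change of variables $t=\sin_{r'r}\theta$, so neither gains generality; yours avoids introducing $\sin_{r'r}$ and $\cos_{r'r}$ at the cost of an extra substitution and the generalized hyperbolic identities.
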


\begin{proof}
Putting $t=\sin_{r'r}{\theta}$ in the definition of $K_{r'r}(k)$
and using \eqref{eq:reduction}, we have
$$K_{r'r}(k)=\int_0^{\frac{\pi_{r'r}}{2}}
\frac{d\theta}{(1-k^r\sin_{r'r}^r{\theta})^{\frac1r}}.$$

We follow the proof of Qi and Huang \cite{QH}, which deals with
the case $r=2$. 
Let $p(x)=1,\ f(x)=(1-k^r\sin_{r'r}^r{\theta})^{-\frac1r},\
g(x)=\cos_{r'r}^{r-1}{\theta}$ or $\sin_{r'r}^{r-1}{\theta},\
[a,b]=[0,\pi_{r'r}/2]$ in Tchebycheff's integral inequality
\eqref{eq:tchebycheff} of 
Lemma \ref{lem:tchebycheff}, then we obtain
$$K_{r'r}(k)
\int_0^{\frac{\pi_{r'r}}{2}} \cos_{r'r}^{r-1}{\theta}\,d\theta
\geq \int_0^{\frac{\pi_{r'r}}{2}} 1\,d\theta
\int_0^{\frac{\pi_{r'r}}{2}}
\frac{\cos_{r'r}^{r-1}{\theta}}{(1-k^r\sin_{r'r}^r{\theta})^{\frac1r}}\,d\theta$$
or
$$K_{r'r}(k)
\int_0^{\frac{\pi_{r'r}}{2}} \sin_{r'r}^{r-1}{\theta}\,d\theta
\leq \int_0^{\frac{\pi_{r'r}}{2}} 1\,d\theta
\int_0^{\frac{\pi_{r'r}}{2}}
\frac{\sin_{r'r}^{r-1}{\theta}}{(1-k^r\sin_{r'r}^r{\theta})^{\frac1r}}\,d\theta.$$
By \eqref{eq:sr'r} and \eqref{eq:cr'r}, easy calculation gives
$$\int_0^{\frac{\pi_{r'r}}{2}} \cos_{r'r}^{r-1}{\theta}\,d\theta
=\int_0^{\frac{\pi_{r'r}}{2}} \sin_{r'r}^{r-1}{\theta}\,d\theta
=1.$$
Moreover, putting $\sin_{r'r}{\theta}=t$, we have
\begin{align*}
\int_0^{\frac{\pi_{r'r}}{2}}
\frac{\cos_{r'r}^{r-1}{\theta}}{(1-k^r\sin_{r'r}^r{\theta})^{\frac1r}}\,d\theta
=\int_0^1 \frac{dt}{(1-k^rt^r)^{\frac1r}}
=\frac1k \int_0^k \frac{ds}{(1-s^r)^{\frac1r}}
=\frac1k \sin^{-1}_r{k}.
\end{align*}
Similarly, putting $\cos_{r'r}{\theta}=t$ and $t^r=\frac{1-k^r}{k^r}\frac{s^r}{1-s^r}$, we obtain
\begin{align*}
\int_0^{\frac{\pi_{r'r}}{2}}
\frac{\sin_{r'r}^{r-1}{\theta}}{(1-k^r\sin_{r'r}^r{\theta})^{\frac1r}}\,d\theta
=\int_0^1 \frac{dt}{(1-k^r+k^rt^r)^{\frac1r}}
=\frac1k \int_0^k \frac{ds}{1-s^r}=\frac1k \tanh_r^{-1}{k}.
\end{align*}
Thus, we accomplished the first and second inequalities of \eqref{eq:estimatek}.

Finally, from the equality above, we obtain the third inequality of 
\eqref{eq:estimatek} as
$$\frac1k \tanh_r^{-1}{k}
=\int_0^1 \frac{dt}{(1-k^r+k^rt^r)^{\frac1r}}
\leq \frac{1}{(1-k^r)^{\frac1r}}.$$

The graphs of terms of \eqref{eq:estimatek} for $r=2$ 
can be shown in Figure \ref{fig:kpq}.
\end{proof}

\begin{figure}[htbp]
\centering
\includegraphics[width=5cm,clip]{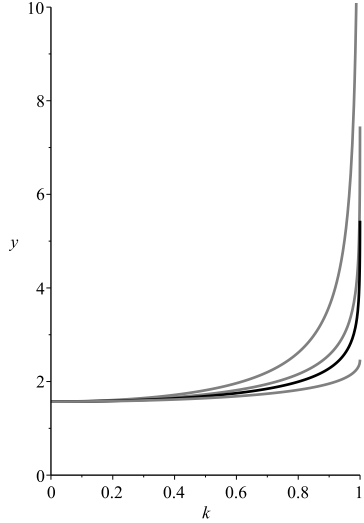}
\caption{The graphs of terms of \eqref{eq:estimatek} for $r=2$.
The black line and the gray lines indicate $K_{r'r}(k)$ and the others,
respectively.}
\label{fig:kpq}
\end{figure}


\section{The Operator $T$}
\label{sec:TOT}

Let $\alpha \in (1,\infty)$ be an arbitrary number. 
In this section, we will make the functions
$$f_n(x)=f_n(x,k)=\sn_{pq}(2nK_{pq}(k)x,k) \in L^\alpha(0,1), \quad n=1,2,\ldots,$$
correspond to the sine series
$$e_n(x)=\sin(n \pi x) \in L^\alpha(0,1),\quad n=1,2,\ldots,$$
which form a basis of $L^\alpha(0,1)$.

\begin{thm}
\label{thm:homeomorphism}
If there is a linear homeomorphism $T$ of $L^\alpha(0,1)$ satisfying 
$Te_n=f_n,\ n=1,2,\ldots$,
then $\{f_n(x,k)\}$ forms a Riesz basis of $L^2(0,1)$ and 
a Schauder basis of $L^\alpha(0,1)$ for any 
$\alpha \in (1,\infty)$.
\end{thm}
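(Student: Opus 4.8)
The plan is to obtain both conclusions directly from the abstract theory collected in Section \ref{sec:PB}, using the hypothesised operator $T$ only through its defining properties. The starting point is that the sine system $e_n(x)=\sin(n\pi x)$ is a (Schauder) basis of $L^\alpha(0,1)$ for every $\alpha\in(1,\infty)$, and that after normalisation $\{\sqrt{2}\,e_n\}$ is an orthonormal basis of the Hilbert space $L^2(0,1)$. These are the two reference bases against which $\{f_n\}$ will be compared.

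First I would treat the Schauder claim. Applying the hypothesis with a fixed $\alpha\in(1,\infty)$, the identity $f_n=Te_n$ together with the boundedness and bounded invertibility of $T$ says precisely that $\{f_n\}$ is equivalent to $\{e_n\}$ in the sense of Section \ref{sec:PB}. Since $\{e_n\}$ is a basis of $L^\alpha(0,1)$, Proposition \ref{prop:higgins75} shows that $\{f_n\}$ is again a basis of $L^\alpha(0,1)$. Proposition \ref{prop:schauder} then upgrades this to a Schauder basis, because every basis of a Banach space is a Schauder basis. As $\alpha$ was arbitrary, this settles all $\alpha\in(1,\infty)$.

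For the Riesz claim I would specialise to $\alpha=2$, where $L^2(0,1)$ is a Hilbert space and $T$ is a linear homeomorphism of it. Scaling is itself a homeomorphism, so $\{e_n\}$ is equivalent to the orthonormal basis $\{\sqrt{2}\,e_n\}$; composing with $T$ shows that $\{f_n\}$ is a basis equivalent to an orthonormal basis. This is exactly assertion (a) of Bari's theorem (Proposition \ref{thm:bari311}), whose equivalent assertion (b) states that $\{f_n\}$ is complete and a Riesz basis of $L^2(0,1)$, as required. I expect no genuine obstacle here: the statement is essentially a packaging result that converts the analytic fact ``$T$ exists and is a homeomorphism'' into the two basis statements, and the only points needing care are the harmless normalisation $\{e_n\}\mapsto\{\sqrt{2}\,e_n\}$ and the observation that the $L^2$ case is subsumed under the hypothesis by taking $\alpha=2$.
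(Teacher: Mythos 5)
Your argument is correct and follows essentially the same route as the paper: equivalence via $T$ plus Proposition \ref{prop:higgins75} and Proposition \ref{prop:schauder} for the Schauder claim, and Bari's theorem (Proposition \ref{thm:bari311}) for the Riesz claim at $\alpha=2$. The only difference is cosmetic: the paper justifies that $\{e_n\}$ is a basis of $L^\alpha(0,1)$ by odd extension from the sine--cosine system of Example \ref{exmp:singer343}, whereas you take that fact as given and instead spell out the harmless $\sqrt{2}$ normalisation needed to invoke Bari's theorem.
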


\begin{proof}
By Example \ref{exmp:singer343}, any function in $L^\alpha(-1,1)$
has a unique sine-cosine series representation. For any $f \in L^\alpha(0,1)$,
we can thus represent its odd extension to $L^\alpha(-1,1)$ uniquely in a sine
series, so the $e_n$ form a basis of $L^\alpha(0,1)$. 
Since $\{e_n\}$ and $\{f_n\}$ are equivalent, according to Proposition
\ref{prop:higgins75} the same is true for the $f_n$.
It follows from Proposition \ref{prop:schauder} that  
they form a Schauder basis of $L^\alpha(0,1)$.
The argument for a Riesz basis when $\alpha=2$ is similar and follows 
from Proposition \ref{thm:bari311}. 
\end{proof}

In the remainder of this section we define $T$ as a linear combination
of certain isometries of $L^\alpha(0,1)$. Then we show that $T$ is a bounded
operator satisfying $Te_n=f_n,\ n=1,2,\ldots$, for all $p,\ q \in (1,\infty)$.

The functions $f_n$
have Fourier sine series expansions
$$f_n(x)=\sum_{l=1}^\infty \widehat{f_n}(l) e_l(x),$$
where 
$$\widehat{f_n}(l)=2\int_0^1 f_n(x) e_l(x)\,dx.$$
An argument involving symmetry with respect to the middlepoint 
$x=1/2$ easily shows that $\widehat{f_1}(l)=0$ whenever
$l$ is even. On account of this property, 
we can show $\widehat{f_n}(l)$ by using $\widehat{f_1}(l)$
as follows.
\begin{align}
\label{eq:(10)}
\widehat{f_n}(l)
&=2\int_0^1 f_1(nx)e_l(x)\,dx \notag \\
&=2\sum_{m:\mathrm{odd}}^\infty \widehat{f_1}(m)\int_0^1 e_{mn}(x)
e_l(x)\,dx \notag \\
&=
\begin{cases}
\widehat{f_1}(m) & \mbox{if $mn=l$ for some odd $m$},\\
0 & \mbox{otherwise}.
\end{cases}
\end{align}

In what follows we will often denote $\widehat{f_1}(m)$ by $\tau_m$.
We first find a bound on $|\tau_m|$ which will be crucial in the 
definition of $T$ below. Since $\tau_m=0$ if $m$ is even, 
we may assume that $m$ is odd. Integration by parts ensures that
\begin{align*}
\tau_m
&=4\int_0^{\frac12} f_1(x)e_m(x)\,dx
=-\frac{4}{m^2\pi^2}\int_0^{\frac12}f_1''(x)e_m(x)\,dx,
\end{align*}
where the integrals exist because $f_1'' \in L^1(0,1)$.
In fact \eqref{eq:integrability} shows that
\begin{equation}
\label{eq:(12)}
|\tau_m|
\le \frac{4}{m^2\pi^2}
\int_0^{\frac12}|f_1''(x)e_m(x)|\,dx
<\frac{4}{m^2\pi^2} \int_0^{\frac12}|f_1''(x)|\,dx
=\frac{8K_{pq}(k)}{m^2\pi^2}.
\end{equation}

In order to construct the linear operator $T$,
we next define isometries $M_m$ of the Banach space $L^\alpha(0,1)$
by $M_mg(x):=g^*(mx),\ m=1,2,\ldots$,
where $g^*$ is its successive antiperiodic extension 
of $g$ over $\mathbb{R}_+$ by $g^*=g$ on $[0,1]$, and
\begin{equation*}
g^*(x)=-g^*(2n-x) \quad \mbox{if}\ n <x \le n+1, 
\quad n=1,2,\ldots.
\end{equation*}
Notice that $M_me_n=e_{mn}$.

\begin{lem}
\label{lem:7}
The maps $M_m$ are isometric linear transformations of $L^\alpha(0,1)$
for all $m=1,2,\ldots$ and $\alpha \in (1,\infty)$.
\end{lem}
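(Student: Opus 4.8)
The plan is to verify separately the two defining properties of an isometric linear transformation: linearity, which is immediate, and norm preservation, which is the substance of the lemma. Linearity follows at once from the fact that the antiperiodic extension $g \mapsto g^*$ is itself linear — the recursion $g^*(x) = -g^*(2n-x)$ together with $g^* = g$ on $[0,1]$ respects sums and scalar multiples — so that $M_m(ag + bh)(x) = (ag+bh)^*(mx) = a\,g^*(mx) + b\,h^*(mx) = a M_m g(x) + b M_m h(x)$.

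For the isometry the key preliminary step is to unwind the recursion and identify $g^*$ explicitly on each unit interval. I would prove by induction on $j$ that, for $x \in [j,j+1]$,
$$
g^*(x) =
\begin{cases}
g(x-j) & \text{if } j \text{ is even},\\
-g(j+1-x) & \text{if } j \text{ is odd}.
\end{cases}
$$
The base case $j=0$ is the definition $g^*=g$ on $[0,1]$. For the inductive step, if $x \in (n,n+1]$ then $2n-x \in [n-1,n)$, so applying the induction hypothesis at index $n-1$ to $g^*(2n-x)$ and then using $g^*(x) = -g^*(2n-x)$ produces the displayed formula at index $j=n$; one checks that the two parities, together with their signs, interchange exactly as required.

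With this formula in hand, taking absolute values removes the sign, and in each case a single change of variables ($u=x-j$ when $j$ is even, $u=j+1-x$ when $j$ is odd) gives
$$
\int_j^{j+1} |g^*(x)|^\alpha\,dx = \int_0^1 |g(u)|^\alpha\,du = \|g\|_{L^\alpha(0,1)}^\alpha
$$
for every integer $j \ge 0$. I would then substitute $u=mx$ in the definition of $M_m$ and split the resulting integral over $[0,m]$ into its $m$ unit subintervals:
$$
\|M_m g\|_{L^\alpha(0,1)}^\alpha = \int_0^1 |g^*(mx)|^\alpha\,dx = \frac{1}{m}\sum_{j=0}^{m-1}\int_j^{j+1}|g^*(u)|^\alpha\,du = \frac{1}{m}\cdot m\,\|g\|_{L^\alpha(0,1)}^\alpha,
$$
which yields $\|M_m g\|_{L^\alpha(0,1)} = \|g\|_{L^\alpha(0,1)}$ for all $m=1,2,\ldots$ and $\alpha \in (1,\infty)$.

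I do not expect a genuine obstacle here: the only point requiring care is keeping the parity bookkeeping in the inductive identification of $g^*$ correct, so that the reflections and sign changes line up. Once that formula is established, the norm computation is a routine change of variables valid in fact for every $\alpha \in (0,\infty)$, the restriction $\alpha \in (1,\infty)$ being inherited only from the ambient Banach space $L^\alpha(0,1)$.
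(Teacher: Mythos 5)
Your proof is correct and follows the same route as the paper: substitute $u=mx$, split $[0,m]$ into unit intervals, and use that $|g^*|$ restricted to each $[l-1,l]$ has the same $L^\alpha$ integral as $|g|$ on $[0,1]$. The paper leaves that last equality (and linearity) implicit, whereas you justify it by an explicit inductive identification of $g^*$ on each unit interval; this is a harmless elaboration, not a different argument.
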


\begin{proof}
\begin{align*}
\|M_mg\|_\alpha^\alpha
&=\int_0^1|M_mg(x)|^\alpha\,dx=\int_0^1 |g^*(mx)|^\alpha\,dx\\
&=\frac{1}{m}\int_0^m|g^*(u)|^\alpha\,du
=\frac{1}{m}\sum_{l=1}^m \int_{l-1}^l|g^*(u)|^\alpha\,du\\
&=\frac{1}{m}\sum_{l=1}^m \int_0^1|g(u)|^\alpha\,du
=\int_0^1|g(u)|^\alpha\,du=\|g\|_\alpha^\alpha.
\end{align*}
\end{proof}

We now define $T:L^\alpha(0,1) \to L^\alpha(0,1)$ by
\begin{equation}
\label{eq:(13)}
Tg(x)=\sum_{m=1}^\infty \tau_m M_m g(x).
\end{equation}

Lemma \ref{lem:7}, the triangle inequality and \eqref{eq:(12)} ensure
that $T$ is a bounded everywhere-defined operator with
$\|T\|_{(L^\alpha \to L^\alpha)} \le K_{pq}(k)$.

We conclude this section by showing that $Te_n=f_n$.
Indeed, by virtue of \eqref{eq:(10)},
$$Te_n=\sum_{m=1}^\infty \tau_m e_{mn}
=\sum_{m:\mathrm{odd}}^\infty \widehat{f_1}(m)e_{mn}
=\sum_{l=1}^\infty \widehat{f_n}(l)e_{l}
=f_n.$$


\section{Bounded Invertibility of $T$}
\label{sec:BIT}

In this section we complete the proof of Theorem \ref{thm:Main}
by showing that $T$ has a bounded inverse for all $p,\ q$
and $k$ satisfying \eqref{eq:Alpha} and \eqref{eq:K}.

Observe that \eqref{eq:(13)} and the triangle inequality give
$$\|Tg-\tau_1M_1g\|_\alpha\le \sum_{m=3}^\infty |\tau_m| \|M_m g\|_\alpha,$$
so that Lemma \ref{lem:7} and the fact $M_1=I$
(the identity operator on $L^\alpha(0,1)$) give
$$\|T-\tau_1I\|_{(L^\alpha \to L^\alpha)} \le \sum_{m=3}^\infty |\tau_m|.$$
Thus, by C.\,Neumann's theorem \cite[Theorem 2 in p.\,69]{Y},
to show that $T$ has a bounded inverse,
it is sufficient to prove 
\begin{equation}
\label{eq:(14)}
\sum_{m=3}^\infty |\tau_m| < |\tau_1|.
\end{equation}

We estimate the left-hand side from above and 
the right-hand side from below. Inequality \eqref{eq:(12)} shows that
\begin{equation}
\label{eq:lhs}
\sum_{m=3}^\infty |\tau_m| \leq \frac{8K_{pq}(k)}{\pi^2}
\left(\frac{\pi^2}{8}-1\right).
\end{equation}
On the other hand, it follows from Lemma \ref{lem:convexity} that
\begin{equation}
\label{eq:rhs}
|\tau_1|=4\int_0^{\frac12}\sn_{pq}(2K_{pq}(k)x,k)\sin{\pi x}\,dx
\geq 8\int_0^{\frac12}x\sin{\pi x}\,dx=\frac{8}{\pi^2}.
\end{equation}
Then, we can prove

\begin{thm}
\label{thm:general}
Let $p,\ q \in (1,\infty)$ and $k \in [0,1)$. The sequence  
$\{f_n(x,k)\}$ forms a Riesz basis of $L^2(0,1)$ and 
a Schauder basis of $L^\alpha(0,1)$ for any 
$\alpha \in (1,\infty)$ if
\begin{equation}
\label{eq:kpq}
K_{pq}(k) < \frac{8}{\pi^2-8}.
\end{equation}
\end{thm}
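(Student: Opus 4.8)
The plan is to apply Theorem~\ref{thm:homeomorphism}: once a linear homeomorphism $T$ of $L^\alpha(0,1)$ with $Te_n=f_n$ is available, the Riesz/Schauder basis conclusions follow immediately. The candidate $T$ is the operator defined in~\eqref{eq:(13)}, which was already shown in Section~\ref{sec:TOT} to be a bounded everywhere-defined operator satisfying $Te_n=f_n$. So the entire task reduces to checking that $T$ is invertible with bounded inverse. As set up just above, writing $T=\tau_1 I+(T-\tau_1 I)$ and using $M_1=I$ together with the isometry of the $M_m$ gives $\|T-\tau_1 I\|_{(L^\alpha\to L^\alpha)}\le\sum_{m=3}^\infty|\tau_m|$, so by C.\,Neumann's theorem it suffices to verify the scalar condition~\eqref{eq:(14)}, namely $\sum_{m=3}^\infty|\tau_m|<|\tau_1|$.

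I would then simply feed in the two bounds already in hand. The upper estimate~\eqref{eq:lhs} comes from the decay $|\tau_m|\le 8K_{pq}(k)/(m^2\pi^2)$ of~\eqref{eq:(12)}, summed over $m\ge 3$ (only odd $m$ contribute, since $\tau_m=0$ for even $m$), using $\sum_{m\ge 3,\ \mathrm{odd}}m^{-2}=\pi^2/8-1$; the lower estimate~\eqref{eq:rhs} comes from Lemma~\ref{lem:convexity}, which yields $\sn_{pq}(2K_{pq}(k)x,k)\ge 2x$ on $[0,1/2]$ and hence $|\tau_1|\ge 8/\pi^2$. Substituting these into~\eqref{eq:(14)} reduces the whole matter to the single scalar inequality
\begin{equation*}
\frac{8K_{pq}(k)}{\pi^2}\left(\frac{\pi^2}{8}-1\right)<\frac{8}{\pi^2},
\end{equation*}
and dividing by $8/\pi^2$ rewrites this as $K_{pq}(k)\,(\pi^2-8)/8<1$, i.e.\ $K_{pq}(k)<8/(\pi^2-8)$, which is precisely the hypothesis~\eqref{eq:kpq}. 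Moreover the strictness of~\eqref{eq:kpq} propagates unchanged through this chain, so~\eqref{eq:(14)} holds strictly and $T$ is boundedly invertible.

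Frankly, I do not expect a genuine obstacle here, because all of the analytic effort has been front-loaded into the preceding sections: the regularity and $L^1$-integrability of $f_1''$ behind~\eqref{eq:(12)}, the isometry property of the $M_m$ in Lemma~\ref{lem:7}, and the convexity-type estimate of Lemma~\ref{lem:convexity}. What remains for this theorem is the one-line algebraic reduction of~\eqref{eq:(14)} to~\eqref{eq:kpq}. If anything deserves care, it is only keeping the inequalities strict at the junction where~\eqref{eq:lhs} and~\eqref{eq:rhs} are combined, which is automatic here since~\eqref{eq:kpq} is assumed strict.
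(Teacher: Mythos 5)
Your proposal is correct and follows the paper's own proof essentially verbatim: combine the upper bound \eqref{eq:lhs} and the lower bound \eqref{eq:rhs} with the hypothesis \eqref{eq:kpq} to get \eqref{eq:(14)}, conclude bounded invertibility of $T$ by C.\,Neumann's theorem, and finish via Theorem \ref{thm:homeomorphism}. The only addition you make is spelling out the odd-$m$ sum $\sum_{m\ge 3,\,m\ \mathrm{odd}} m^{-2}=\pi^2/8-1$ behind \eqref{eq:lhs}, which the paper leaves implicit.
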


\begin{rem}
In case $k=0$, inequality \eqref{eq:kpq} corresponds with
$\pi_{pq}<16/(\pi^2-8)$, which is given in \cite[Corollary 4.3]{EGL}.
\end{rem}

\begin{proof}
Combining \eqref{eq:lhs}, \eqref{eq:kpq} and \eqref{eq:rhs}, 
we obtain \eqref{eq:(14)}. Thus, $T$ has a bounded inverse
and is a linear homeomorphism in $L^\alpha(0,1)$. Therefore,
from Theorem \ref{thm:homeomorphism}, 
we have completed the proof.
\end{proof}

Now we are in a position to show Theorem \ref{thm:Main}.

\begin{proof}[Proof of Theorem $\ref{thm:Main}$]
Assume \eqref{eq:Alpha} and 
let $k$ be a number satisfying \eqref{eq:K}. 
From Lemmas \ref{lem:homo} and \ref{lem:estimatek}, 
we have
$$K_{pq}(k) \leq
\frac{r \pi_{r'r}}{2q}
\frac{\tanh_r^{-1}{k^{\frac{q}{r}}}}{k^{\frac{q}{r}}}.$$
Using \eqref{eq:beta} to the right-hand side, we obtain
$$K_{pq}(k) \leq \frac{1}{q}B\left(\frac{1}{r},\frac{1}{r}
\right)\frac{\tanh_r^{-1}{k^{\frac{q}{r}}}}{k^{\frac{q}{r}}}.$$
Thus, \eqref{eq:Alpha} and \eqref{eq:K} give
$$K_{pq}(k)<\frac{8}{\pi^2-8},$$
which implies \eqref{eq:kpq} of Theorem \ref{thm:general}.
We have thus proved the theorem.
\end{proof}

\begin{proof}[Proof of Corollary $\ref{cor:main}$]
Let $1<p' \le q<\infty$. Then $r=q$, and it suffices to show 
that \eqref{eq:Alpha} is satisfied.   
Since we have the inequality $tB(t,t) \leq 2$ in 
the proof of Proposition \ref{lem:pipp},
we obtain
$$\frac{1}{q}B\left(\frac{1}{r},\frac{1}{r}\right)
\leq 2<\frac{8}{\pi^2-8},$$
so that $\eqref{eq:Alpha}$ holds. 

In particular, we consider the case $p=q=2$. 
Then, $r=q=2$, and by Theorem \ref{thm:Main},
the sequence $\{\sn(2nK(k)x,k)\}$ is a basis
in $L^\alpha(0,1)$ for any $\alpha \in (1,\infty)$ when
$$\frac{\tanh^{-1}{k}}{k}
\leq \frac{16}{(\pi^2-8)\pi},$$
which holds true if $0<k \leq 0.9909\cdots$.
\end{proof}

\begin{proof}[Proof of Corollary $\ref{thm:main}$]
Suppose that $q$ and $r$ satisfy \eqref{eq:alpha}. 
Since $tB(t,t) \leq 2$ as the proof of Corollary \ref{cor:main},
we obtain
$$\frac{1}{q}B\left(\frac{1}{r},\frac{1}{r}\right)
\leq \frac{2r}{q}<\frac{8}{\pi^2-8},$$
which implies \eqref{eq:Alpha}. 
Furthermore, 
if we assume \eqref{eq:k}, then \eqref{eq:estimatek} in 
Lemma \ref{lem:estimatek} shows
$$\frac{\tanh_r^{-1}{k^{\frac{q}{r}}}}{k^{\frac{q}{r}}}
\leq \frac{1}{(1-k^q)^{\frac1r}}
\leq \frac{4q}{(\pi^2-8)r}
\leq \frac{8q}{\pi^2-8}B\left(\frac{1}{r},\frac{1}{r}\right)^{-1},$$
so that \eqref{eq:K} holds, and Theorem \ref{thm:Main} gives 
Corollary \ref{thm:main}.  
\end{proof}


\begin{thebibliography}{99}
\bibitem{BBCDG} P.\,Binding, L.\,Boulton, J.\,\v{C}epi\v{c}ka, P.\,Dr\'{a}bek and P.\,Girg,
Basis properties of eigenfunctions of the p-Laplacian, 
\textit{Proc.\,Amer.\,Math.\,Soc.}\,\textbf{134} (2006), no.\,12, 3487--3494.
\bibitem{BE} P.J.\,Bushell and D.E.\,Edmunds,
Remarks on generalized trigonometric functions,
\textit{Rocky Mountain J.\,Math.}\,\textbf{42} (2012), no.\,1, 25--57.
\bibitem{BL} L.\,Boulton and G.\,Lord,
Approximation properties of the $q$-sine bases, 
\textit{Proc.\,R.\,Soc.\,Lond.\,Ser.\,A Math.\,Phys.\,Eng.\,Sci.}
\textbf{467} (2011), no.\,2133, 2690--2711.  
\bibitem{C} B.\,D.\,Craven, 
Stone's theorem and completeness of orthogonal systems,
\textit{J.\,Austral.\,Math.\,Sci.} \textbf{12} (1971), 
211-223. 
\bibitem{DEM} M.\,del Pino, M.\,Elgueta, R.\,Man\'{a}sevich,
A homotopic deformation along $p$ of a Leray-Schauder degree 
result and existence for $(\vert u'\vert 			
^{p-2}u')'+f(t,u)=0,\;u(0)=u(T)=0,\;p>1$, 
\textit{J. Differential Equations} \textbf{80} (1989), 1--13. 
\bibitem{Do} O.\,Do\v{s}l\'{y},
Half-linear differential equations,
\textit{Handbook of differential equations}, 
161--357, Elsevier/North-Holland, Amsterdam, 2004.
\bibitem{DoR} O.\,Do\v{s}l\'{y} and P.\,\v{R}eh\'{a}k,
\textit{Half-linear differential equations}, 
North-Holland Mathematics Studies, 202. Elsevier Science B.V.,
Amsterdam, 2005. 
\bibitem{DKT} P.\,Dr\'{a}bek, P.\,Krej\v{c}\'{i} and 
P.\,Tak\'{a}\v{c},
\textit{Nonlinear differential equations}, 
Papers from the Seminar on Differential Equations held in 
Chvalatice, June 29--July 3, 1998. Chapman \& Hall/CRC 
Research Notes in Mathematics, 404. Chapman \& Hall/CRC, 
Boca Raton, FL, 1999.
\bibitem{DM} P.\,Dr\'{a}bek and R.\,Man\'{a}sevich,
On the closed solution to some nonhomogeneous eigenvalue problems 
with $p$-Laplacian,
\textit{Differential Integral Equations}
\textbf{12} (1999), 773--788. 
\bibitem{EGL} D.E.\,Edmunds, P.\,Gurka and J.\,Lang,
Properties of generalized trigonometric functions, 
\textit{J.\,Approx.\,Theory} \textbf{164} (2012), no.\,1, 47--56. 
\bibitem{E} A.\,Elbert,
A half-linear second order differential equation. 
\textit{Qualitative 
theory of differential equations, Vol. I, II} (Szeged, 1979),
pp. 153--180, Colloq. Math. Soc. Janos Bolyai, 30,
North-Holland, Amsterdam-New York, 1981. 
\bibitem{GK} I.\,C.\,Gohberg and M.\,G.\,Kre\u{\i}n,
\textit{Introduction to the theory of linear nonselfadjoint operators}, 
Translated from the Russian by A. Feinstein. Translations of Mathematical Monographs, Vol. 18 American Mathematical Society, Providence, R.I. 1969
\bibitem{H} J.\,R.\,Higgins,
\textit{Completeness and basis properties of sets of special functions}, 
Cambridge Tracts in Mathematics, Vol. 72. Cambridge University Press, 
Cambridge-New York-Melbourne, 1977. 
\bibitem{LE} J.\,Lang and D.E.\,Edmunds,
\textit{Eigenvalues, embeddings and generalised trigonometric functions}, 
Lecture Notes in Mathematics, 2016. Springer, Heidelberg, 2011. 
\bibitem{Li} P.\,Lindqvist,
Some remarkable sine and cosine functions,
\textit{Ricerche Mat.}\,\textbf{44} (1995), no.\,2, 269--290.
\bibitem{LP} P.\,Lindqvist and J.\,Peetre,
$p$-arclength of the $q$-circle,
\textit{The Mathematics Student} \textbf{72} (2003), 139--145.
\bibitem{LP2} P.\,Lindqvist and J.\,Peetre,
\textit{Comments on Erik Lundberg's 1879 thesis. Especially on the work of 
G\"{o}ran Dillner and his influence on Lundberg},
Memorie dell'Instituto Lombardo
(Classe di Scienze Matem.\,Nat.) 31, 2004.
\bibitem{M} D.S.\,Mitrinovi\'{c},
\textit{Analytic inequalities}, 
In cooperation with P. M. Vasi\'{c},
Die Grundlehren der mathematischen Wissenschaften, Band 165 Springer-Verlag, New York-Berlin, 1970.
\bibitem{N} Y.\,Naito,
Uniqueness of positive solutions of quasilinear differential 
equations,
\textit{Differential Integral Equations} \textbf{8} (1995), no. 7, 1813-1822. 
\bibitem{QH} F.\,Qi and Z.\,Huang,
Inequalities of the complete elliptic integrals, 
Tamkang J.\,Math.\,\textbf{29} (1998), no.\,3, 165--169. 
\bibitem{S} I.\,Singer,
\textit{Bases in Banach spaces. I}, 
Die Grundlehren der mathematischen Wissenschaften, Band 154. 
Springer-Verlag, New York-Berlin, 1970. 
\bibitem{T} S.\,Takeuchi,
Generalized Jacobian elliptic functions and their application to bifurcation problems 
associated with $p$-Laplacian, 
\textit{J.\,Math.\,Anal.\,Appl.}\,\textbf{385} (2012), no.\,1, 24--35. 
\bibitem{WW} E.T.\,Whittaker and G.N.\,Watson,
\textit{A course of modern analysis}, 
An introduction to the general theory of infinite processes 
and of analytic functions; with an account of the principal
transcendental functions. Reprint of the fourth (1927) edition. 
Cambridge Mathematical Library. Cambridge University Press, 
Cambridge, 1996.
\bibitem{Y} K.\,Yosida,
\textit{Functional analysis},
Reprint of the sixth (1980) edition. Classics in Mathematics. Springer-Verlag, Berlin, 1995.
\end{thebibliography}
\end{document}